\begin{document}

\begin{frontmatter}
  \title{Coalgebraic proof translations for non-wellfounded proofs}
  \author{Borja Sierra Miranda}\footnote{borja.sierra@unibe.ch, supported by the Swiss National Science Foundation (SNSF) under grant agreement Nr. 200021\_214820 (Non-wellfounded and cyclic proof theory)}
  \author{Thomas Studer}\footnote{thomas.studer@unibe.ch, supported by the Swiss National Science Foundation (SNSF) under grant agreement Nr. 200021\_214820 (Non-wellfounded and cyclic proof theory)}
  \author{Lukas Zenger}\footnote{lukas.zenger@unibe.ch, supported by the European Union's Horizon 2020 research and innovation program under the Marie Skłodowska-Curie grant agreement Nr. 101007627}
  \address{Institut f\"ur Informatik, Universit\"at Bern\\ Neubr\"uckstrasse 10, 3012 Bern, Switzerland}

  \begin{abstract}
    Non-wellfounded proof theory results from allowing proofs of infinite height in proof theory.
    To guarantee that there is no vicious infinite reasoning, it is usual to add a constraint
    to the possible infinite paths appearing in a proof. Among these conditions, one of the 
    simplest is enforcing that any infinite path goes through the premise of a rule infinitely often.
    Systems of this kind appear for modal logics with conversely well-founded frame conditions like \(\textsf{GL}\) or \(\Grz\).
  
    In this paper, we provide a uniform method to define proof translations for such systems, guaranteeing
    that the condition on infinite paths is preserved. In addition, as particular instance of our method,
    we establish cut-elimination for a non-wellfounded system of the logic \(\Grz\). 
    Our proof relies only on the categorical definition of corecursion via coalgebras, while an earlier proof
    by Savateev and Shamkanov uses ultrametric spaces and a corresponding fixed point theorem.
  \end{abstract}

  \begin{keyword}
  Non-wellfounded Proof Theory, Coalgebra, Cut-Elimination.
  \end{keyword}
 \end{frontmatter}

\section{Introduction}

In proof theory proofs are traditionally viewed as finite trees that are labelled by sets of formulas (called sequents) according to some fixed set of rules. In recent decades a new notion of proof has gained prominence, namely the notion of a non-wellfounded proof. These are proofs in the traditional sense --- i.e. labelled trees --- that allow some branches to be infinitely long, whence the name non-wellfounded. Such proofs can be considered to be a formal counterpart of proofs by infinite descent \cite{Brotherston-Simpson:07}. In order to ensure soundness in the presence of infinite branches, a global correctness condition is imposed that distinguishes correct infinitary reasoning from vicious reasoning. One kind of global soundness condition is to enforce that every infinite branch must pass infinitely often through a specific premise of some rule. We call this a \emph{local progress condition}. Non-wellfounded proof systems with a local progress condition have been developed for modal logics with conversely well-founded frame conditions like Gödel-Löb logic $\mathsf{GL}$ \cite{ShamkanovGl} or Grzegorczyk modal logic $\mathsf{Grz}$ \cite{ShamkanovGrz}. It is this kind of soundness condition that is studied in this paper.

Apart from $\mathsf{GL}$ and $\mathsf{Grz}$ there are many other logical systems that allow for non-wellfounded proofs. In particular, non-wellfounded proof systems have been developed for modal fixed point logics, such as the modal mu-calculus~(see e.g. \cite{Studer2008,Stirling2013,Niwinski1996}) as well as many of its fragments including the alternation free mu-calculus~\cite{Marti2021}, common knowledge~\cite{BUCHELI201083,Rooduijn22}, temporal logics~\cite{KokkinisStuder+2016+171+192,turata2023}, or program logics such as PDL~\cite{Docherty2019}. More recently non-wellfounded proof systems have been developed for intuitionistic modal fixed point logics \cite{Afshari2023,guillermo} and also for Peano arithmetic~\cite{Simpson2017}. However, proof systems for these logics usually require a stronger global constraint than the local progress condition, namely a so-called \emph{trace condition}. As trace conditions are not considered in this paper, we will not give more details. The interested reader may consult one of the papers cited above.

Proof translations are commonly studied in structural proof theory, one example being cut-elimination, where arbitrary proofs are translated into cut-free proofs of the same sequents. For finite proofs, cut-elimination is established by showing how to permute instances of cuts upwards, and then how to eliminate instances of cuts at the leaves. It is then shown that applying such permutations recursively yields a cut-free proof. For non-wellfounded proofs such a translation is aggravated due to the fact that the local permutations must preserve the global soundness criterion. Neverthless, results about proof translations for non-wellfounded proofs are found in the literature. To name a few, Savateev and Shamkanov establish cut-elimination for non-wellfounded proofs for $\mathsf{Grz}$~\cite{ShamkanovGrz} using ultrametric spaces and a fixed point theorem to guarantee that the limit of their construction is a (cut-free) proof. Baelde et al. \cite{baelde2016} and Saurin \cite{Saurin2023} consider the problem of cut-elimination for non-wellfounded proofs for linear logic with fixed points, and Das and Pous \cite{Das2018} establish cut-elimination for non-wellfounded proofs for the equational theory for Kleene algebra. Recently, Afshari and Kloibhofer proved cut-elimination for a cyclic calculus for modal logic with an `eventually' operator \cite{cut-cyclic}.

In this paper we study proof translations for non-wellfounded proofs based on a local progress condition. Our main contribution is the development of a uniform method to define such proof translations. Given two calculi $\mathcal{C}_0$ and $\mathcal{C}_1$ based on a local progress condition, we show how to define a map that translates proofs in $\mathcal{C}_0$ into proofs in $\mathcal{C}_1$. Depending on the considered translation, the map can be made to satisfy desirable properties, such as preserving the root sequent. This map is defined by first dissecting non-wellfounded proofs into finite fragments. It is then shown how the translation operates on a fragment locally, which we call a \emph{translation step}. Finally, the entire translation is defined from translation steps by corecursion. As corecursion is essentially a coalgebraic technique, our method is formulated within the realm of coalgebra and draws essentially from its concepts. In particular, the corecursive definition of the translation map is given by the unique morphism from a coalgebra into the final coalgebra of a given endofunctor. 

The developed method is uniform: it can be applied to all non-wellfounded proofs based on a local progress condition, independently of the specific logic, and to any kind of proof translation. We illustrate this by giving a concrete example: we establish cut-elimination for non-wellfounded proofs for $\mathsf{Grz}$. In difference to the proof of Savateev and Shamkanov \cite{ShamkanovGrz}, we do not require to take a detour over ultrametric spaces. Instead it suffices to show how to push instances of cut out of the main fragment of a non-wellfounded proof. Our method then implies the existence of a proof translation mapping non-wellfounded proofs with cuts onto cut-free proofs. This also illustrates that proof translations within our framework can be considered to be close in spirit with proof translations for finite proofs. As for finite proofs, it requires to show how to translate a finite fragment of the proof tree to obtain the entire translation.

We assume that the reader is familiar with basic concepts of coalgebra, such as the definitions of a coalgebra, an endofunctor, a coalgebra morphism and a final coalgebra. An introduction to these concepts can be found in \cite{VENEMA2007331}.

In Section 2 we introduce the mathematical machinery needed to develop our method. In particular we define the endofunctor that allows us to define proof translations by corecursion and so-called finite-fragmented trees, that formalize the intuition of dissecting proofs. In Section 3 we define proof translations and prove their correctness. Section 4 then consists of the cut-elimination proof for $\mathsf{Grz}$.

\section{Corecursion}

\subsection{Basic definitions}

We will refer to (possibly empty) finite sequences of \(\nat\) as \emph{words} and denote the collection of all words as \(\nat^*\). We will use \(w,v,u\) to denote words. The empty word is \(\epsilon\), \(\nat^+\) is the collection of non-empty words. Concatenation of words is denoted as juxtaposition, and so is adding a number to the start or the end of a word. The prefix order on words is denoted as \(\worder\). \(\disjoint{w}{v}\) means that \(w,v\) are disjoint\footnote{In words, that they do not have a common \(\worder\)-upperbound.}. If \(W \subseteq \nat^*\) then we define \(w \circ W = \{wv \mid v \in W\}\).

We will also work with finite sequences of \(\nat^+\), which are denoted by \(r,s\). The empty sequence of words is denoted as \(\nil\), and concatenation is again simply juxtaposition. \(w{:}r\) and \(r{:}w\) denote the sequences obtained by adding the word \(w\) to the start of \(r\) or to the end of \(r\) respectively. The prefix order on these sequences is denoted as \(\lorder\). The number of elements of \(r\) is denoted as \(|r|\).

If \(f\) is a function and \(A\) is a subset of its domain, we will write \(\restricts{f}{A}\) to denote the restriction of \(f\) to \(A\). Notice that if \(w\) is a finite (infinite) sequent, then it is a function with domain a natural number (with domain \(\mathbb{N}\)). Then \(\restricts{w}{n}\) will just be the word consisting in the first \(n\) letters of \(w\).

A \emph{tree} is an ordered pair consisting of a (finite or infinite) collection of words (called \emph{nodes}) closed by the prefix order and a labelling function. Trees are assumed to be finitely branching. Therefore each node \(w\) of a tree has an arity \(k \in \nat\) such that for any \(i \in \nat\), \(wi\) is a node iff \(i < k\). The set of nodes of a tree $\tau$ is denoted by $\nodes{\tau}$ and the set of \emph{leaves} (maximal words by the prefix order belonging to the nodes) by $\leaf{\tau}$. We fix an arbitrary set $A$ whose elements will be used as labels as well as an object $\ast$ not belonging to $A$.

\subsection{Endofunctor}
One way to look at a non-wellfounded tree is as a collection of finite trees with instructions on how to glue the root of each of these finite trees (except one which is at the root) to the leaves of other finite trees in the collection. In order to specify to which leaves of a finite tree  other finite trees can be glued to we introduce the following notion.

\begin{definition}
    A \emph{finite tree with non-wellfounded leaves} is a finite tree \(\iota\) with labels in \(A \cup \{*\}\) that satisfies the following two properties:
    \begin{align*}
       \textup{(i) }\ell^\iota(\epsilon) \neq *, && \textup{(ii) }\ell^\iota(w) = * \text{ implies }w \in \leaf{\iota}.
    \end{align*}
    We denote finite trees with non-wellfounded leaves by \(\iota\) and the collection of finite trees with non-wellfounded leaves by \(\nwtree\). Given \(\iota \in \nwtree\) we define the sets:
    \begin{align*}
       \nwleaf{\iota} = \{w \in \nodes{\iota} \mid \ell^\iota(w) = * \}, &&
       \pnodes{\iota} = \nodes{\iota}\setminus\nwleaf{\iota}.
    \end{align*}
    The elements of the first set are called \emph{non-wellfounded leaves of \(\iota\)} and the elements of the second set are called \emph{proper nodes of \(\iota\)}.
\end{definition}

We define the endofunctor which we need to obtain the desired definition by corecursion.
\begin{definition}
    We define an endofunctor \(\treefun\) over \(\textbf{Set}\) (the category of sets) as:
    \begin{enumerate}
        \item \(\mathcal{T}(X) := \{(\iota,\mu) \mid \iota \in \nwtree \text{ and } \mu : \nwleaf{\iota} \longrightarrow X\}\),
        \item \(\mathcal{T}(f: X \longrightarrow Y) = \Big((\iota,\mu) \mapsto (\iota,f \circ \mu) \Big)\).
    \end{enumerate}
\end{definition}
From now on coalgebra will mean \(\treefun\)-coalgebra and coalgebra morphism will mean \(\treefun\)-coalgebra morphism.

Given a coalgebra \((C,\alpha)\) and an element \(c \in C\) with \(\alpha(c) = (\iota,\mu)\) we will write \(\iota^\alpha_c\) to refer to \(\iota\) and \(\mu^\alpha_c\) to refer to \(\mu\). In case \(\alpha\) can be deduced from context we will omit it.
\begin{definition}
    Let \((C,\alpha)\) be a coalgebra, \(c \in C\) and \(r\) be a finite sequence of \(\nat^+\). We say that \(r\) is an \emph{\(\alpha\) root-path of \(c\)} iff
    \begin{align*}
        &\textsf{nil} \text{ is always a root-path of }c, \\
        & w {:} r \text{ is a root-path of }c \text{ iff }w \in \nwleaf{\iota_c} \text{ and }r \text{ root-path of }\mu^\alpha_c(w).
    \end{align*}
    We will usually denote root-paths with \(r\) or \(s\) and the collection of \(\alpha\)-root paths of \(c\) as \(\rootsp{\alpha, c}\).
\end{definition}

 The notion of root-path is useful to traverse a coalgebra \((C,\alpha)\). Let us have a \(c_0 \in C\) and \(r \in \rootsp{\alpha,c}\). The first word \(w\) of \(r\) is a non-wellfounded leaf of \(\iota^\alpha_c\). Using \(\mu^\alpha_c\) we can obtain a new element of the coalgebra \(c_1 = \mu^\alpha_{c_0}(w)\). We can think of \(c_1\) as the result of traversing \(c_0\) using the \(w\) branch. We can iterate this procedure to exhaust the full root-path. This means that given an element of a coalgebra and one of its root-paths we can obtain a new element of the coalgebra. This concept is formalized in the next definition.

\begin{definition}
    Let \((C,\alpha)\) be a coalgebra, \(c \in C\) and \(r \in \rootsp{\alpha, c}\). We define the \emph{\(\alpha\) subelement of \(c\) generated by \(r\)} by recursion in \(r\) as:
    \begin{align*}
        &\subelement{\alpha}{c}{\textsf{nil}} = c,
        &\subelement{\alpha}{c}{w {:} r} = \subelement{\alpha}{\mu^\alpha_c(w)}{r}.
    \end{align*}
\end{definition}

We can also talk about the tree with non-wellfounded leaves of a subelement, this is called a \emph{fragment} of the original element of the coalgebra.

\begin{definition}
    Let \((C,\alpha)\) be a coalgebra, \(c \in C\) and \(r \in \rootsp{\alpha, c}\). We define the \emph{\(\alpha\)-fragment of \(c\) at \(r\)} by recursion in \(r\) as:
    \begin{align*}
        &\gfragment{\alpha}{c}{\textsf{nil}} = \iota^\alpha_c,
        &\gfragment{\alpha}{c}{w{:}r} = \gfragment{\alpha}{\mu^\alpha_c(w)}{r}.
    \end{align*}
    The fragment \(\gfragment{\alpha}{c}{\textsf{nil}}\) is also called the \emph{main \(\alpha\)-fragment of \(c\)}.
\end{definition}

The naming of these concepts: root-paths, fragments and subelements, are motivated from the final coalgebra of the endofunctor \(\treefun\). The objects of the final coalgebra will be non-wellfounded trees expressed as a finite tree which has finite trees attached to some of its leaves, which may also have attached finite trees to some of their leaves, \ldots If we call roots the points where a finite tree start, a root-path is just a path of roots. A subelement will just be a subtree which do not break any of the finite trees in two and a fragment of a root-path is just a finite tree occurring at the end of the path of roots.

We state some basic properties of root-paths, fragments and subelements.

\begin{lemma}
    \label{lm:fundamental-properties}
    Let \((C,\alpha)\) be a coalgebra. We have that:
\begin{enumerate}
    \item $r \in \rootsp{\alpha, c}$ and \(s \in \rootsp{\alpha, \subelement{\alpha}{c}{r}}\) implies: \(rs \in \rootsp{\alpha, c}\), \(\gfragment{\alpha}{\subelement{\alpha}{c}{r}}{s} = \gfragment{\alpha}{c}{rs}\) and \(\subelement{\alpha}{\subelement{\alpha}{c}{r}}{s} = \subelement{\alpha}{c}{rs}\).
    \item \(r \in \rootsp{\alpha, c}\) implies that for any \(w \in \nat^+\), \(r{:}w \in \rootsp{\alpha, c}\) iff \(w \in \nwleaf{\gfragment{\alpha}{c}{r}}\).
    \item \(s \lorder r\) and \(r \in \rootsp{\alpha, c}\) implies that \(s \in \rootsp{\alpha, c}\).
    \item If \((C,\alpha),(D,\beta)\) are coalgebras and \(\gamma : C \longrightarrow D\) is a coalgebra morphism, we have that:
        \(r \in \rootsp{\alpha, c} \text{ iff }r \in \rootsp{\beta, \gamma(c)}\), \(\gfragment{\alpha}{c}{r} = \gfragment{\beta}{\gamma(c)}{r}\) and \(\gamma(\subelement{\alpha}{c}{r}) = \subelement{\beta}{\gamma(c)}{r}\).
    \end{enumerate}
\end{lemma}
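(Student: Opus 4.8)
The plan is to prove each of the four items by induction on the length \(|r|\) of the relevant sequence of words (and on \(|s|\) for item~3). The observation that makes all four inductions uniform is that the three operations in play --- being a root-path, the fragment \(\gfragment{\alpha}{c}{\cdot}\), and the subelement \(\subelement{\alpha}{c}{\cdot}\) --- are all defined by a recursion that strips off the \emph{first} word of the sequence, passing from \(c\) to \(\mu^\alpha_c(w)\). Hence in every inductive step I would write the sequence in the form \(w{:}r'\), invoke the defining clauses to reduce the statement about \(c\) to the corresponding statement about \(c' := \mu^\alpha_c(w)\) and \(r'\), and apply the induction hypothesis to \(c'\).

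For item~1 I would run a simultaneous induction on \(|r|\) establishing all three conclusions at once. The base case \(r = \nil\) is immediate since \(\subelement{\alpha}{c}{\nil} = c\) and \(\nil s = s\). For the step \(r = w{:}r'\), the hypothesis \(r \in \rootsp{\alpha,c}\) gives \(w \in \nwleaf{\iota^\alpha_c}\) and \(r' \in \rootsp{\alpha, c'}\), so that \(\subelement{\alpha}{c}{r} = \subelement{\alpha}{c'}{r'}\) and therefore \(s \in \rootsp{\alpha, \subelement{\alpha}{c'}{r'}}\); the only bookkeeping fact needed is that concatenation satisfies \((w{:}r')s = w{:}(r's)\), after which each of the three equalities follows by unfolding one clause and applying the induction hypothesis to \(c'\), \(r'\), \(s\). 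Item~3 is an analogous but simpler induction, this time on \(|s|\): writing \(s = w{:}s'\) and using \(s \lorder r\) to conclude \(r = w{:}r'\) with \(s' \lorder r'\), the hypothesis \(r \in \rootsp{\alpha,c}\) yields \(w \in \nwleaf{\iota^\alpha_c}\) and \(r' \in \rootsp{\alpha,c'}\), and the induction hypothesis applied to \(c'\) gives \(s' \in \rootsp{\alpha,c'}\), whence \(s = w{:}s' \in \rootsp{\alpha,c}\).

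Item~2 requires a little care because \(r{:}w\) appends \(w\) at the \emph{end}, whereas the recursion peels from the front. In the base case \(r = \nil\) one uses \(\nil{:}w = w{:}\nil\) together with the fact that \(\nil\) is always a root-path, so that \(r{:}w \in \rootsp{\alpha,c}\) reduces exactly to \(w \in \nwleaf{\iota^\alpha_c} = \nwleaf{\gfragment{\alpha}{c}{\nil}}\). In the step \(r = v{:}r'\) one rewrites \((v{:}r'){:}w = v{:}(r'{:}w)\), peels off \(v\), and applies the induction hypothesis to \(c' = \mu^\alpha_c(v)\) and \(r'\), noting that \(\gfragment{\alpha}{c'}{r'} = \gfragment{\alpha}{c}{r}\). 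For item~4 the essential input is to spell out what it means for \(\gamma\) to be a coalgebra morphism: the condition \(\beta \circ \gamma = \treefun(\gamma)\circ\alpha\) unfolds, via the definition of \(\treefun\) on morphisms, into the two pointwise identities \(\iota^\beta_{\gamma(c)} = \iota^\alpha_c\) and \(\mu^\beta_{\gamma(c)} = \gamma \circ \mu^\alpha_c\). Granting these, induction on \(|r|\) goes through directly: the base case is exactly \(\iota^\beta_{\gamma(c)} = \iota^\alpha_c\), and in the step \(r = w{:}r'\) the preservation of non-wellfounded leaves handles the first coordinate, while \(\mu^\beta_{\gamma(c)}(w) = \gamma(\mu^\alpha_c(w))\) lets me apply the induction hypothesis to \(\mu^\alpha_c(w)\).

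I do not expect a serious obstacle here; all four statements are routine structural inductions. The only genuine content is recognising that the morphism equation \(\beta\circ\gamma = \treefun(\gamma)\circ\alpha\) decomposes into preservation of the tree component and \(\gamma\)-naturality of the leaf-labelling, which is precisely what drives item~4. The mildest source of friction is purely notational: keeping the three simultaneous equalities of item~1 aligned, and handling the front/back asymmetry of the concatenations \(w{:}r'\) versus \(r'{:}w\) in item~2.
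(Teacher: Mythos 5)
Your proposal is correct and matches the paper's approach exactly: the paper proves all four items by ``simple inductions in \(r\)'', which is precisely what you carry out, with the standard unfolding of the coalgebra-morphism equation \(\beta\circ\gamma = \treefun(\gamma)\circ\alpha\) into \(\iota^\beta_{\gamma(c)} = \iota^\alpha_c\) and \(\mu^\beta_{\gamma(c)} = \gamma\circ\mu^\alpha_c\) driving item~4. Your write-up simply supplies the routine details the paper leaves implicit.
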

\begin{proof}
    All of these are simple inductions in \(r\).
\end{proof}

\subsection{Finite-fragmented trees} \label{sec:fftree}

Finite-fragmented trees are non-wellfounded trees with a partition of the nodes. The idea is that each element of the partition gives a finite tree. This will imply that finite-fragmented trees with an appropiate function will form a final coalgebra of \(\treefun\).

\begin{definition}
    Let \(F \subseteq \nat^*\). We say that  $F$ fulfills the \emph{fragmentation properties} iff the following conditions hold:
    \begin{enumerate}
        \item (Finiteness) \(F\) is finite.
        \item (Root) There is a \(w \in F\) such that for any \(v \in F\), \(w \worder v\).
        \item (Convexity) For any \(w,v \in F\) if \(w \worder u \worder v\) then \(u \in F\).
    \end{enumerate}
    Given such a set \(F\), we will denote by \(\rt{}{}{F}\) its unique root, in other words its minimum element.
\end{definition}

\begin{definition}
    A finite-fragmented tree is a pair \(\pi = (\tau,\fragmentation{})\) consisting of:
    \begin{enumerate}
        \item A non-wellfounded tree \(\tau\).
        \item A partition \(\fragmentation{}\) of \(\nodes{\tau}\) such that any \(F \in \fragmentation{}\) fulfills the fragmentation properties, this \(\mathcal{F}\) is called \emph{the fragmentation of \(\pi\)}.
    \end{enumerate}
    We will usually denote finite-fragemented trees with \(\pi\) and the collection of all finite-fragmented trees as \(\fftree\). Given $\pi=(\tau, \fragmentation{})$ we also denote $\fragmentation{}$ by $\fragmentation{\pi}$ and $\tau$ by $\tau^\pi$. Given \(\pi \in \fftree\) we define the roots of \(\pi\) as the set \(\roots{\pi} = \{\rt{}{}{F} \mid F \in \fragmentation{\pi} \}\) and given a node \(w\) of \(\pi\), \({^{\pi}\sqrt{w}}\) will be the minimum of the equivalence class \(w\) belongs to in \(\pi\).

    We will write \((\tau,\sim)\) for the finite-fragmented tree whose fragmentation is the partition given by the equivalence relation \(\sim\).
\end{definition}

The roots of a finite-fragmented tree mark the end of a finite tree and the start of a new one. We define an immediate successor relation for roots and also assign a measure to nodes depending on the roots below them.

\begin{definition}
    For \(\pi \in \fftree\) define the relation \(\impred{\pi}{}{} \subseteq \roots{\pi}\times \roots{\pi}\) as:
    \[
    \impred{\pi}{w}{v} \text{ iff } w \worders v \text{ and there is no } u \in \roots{\pi} \text{ such that } w \worders u \worders v.
    \]
    Observe that if \(\impred{\pi}{w}{u}, \impred{\pi}{v}{u}\) then \(w = v\).
    Given \(w \in \nodes{\pi}\) define its \emph{\(\pi\)-fragmentation height} as 
    \[\fheight{\pi}{w} = |\{ v \worders w \mid v \in \roots{\pi} \}|,\] where \(|X|\) means the cardinality of \(X\). 
\end{definition}

Given a root \(w\) of \(\pi\) define the finite tree starting at \(w\) and the (finite-fragmented) subtree of $\pi$ starting at \(w\) as follows.

\begin{definition}
    Let \(\pi \in \fftree\) and \(w \in \roots{\pi}\). We define the \emph{tree fragment in \(\pi\) given by \(w\)}, denoted \(\fragment{\pi}{w}\), as the finite tree with non-wellfounded leaves \((\nodes{},\ell)\), where:
    \begin{align*}
        &\nodes{} = \{v \in \nat^*\mid w \sim^\pi wv\} \cup \{ u \in \nat^+\mid \impred{\pi}{w}{wu} \}, \\
        &\ell(v) = \begin{cases}
            \ell^\pi(wv) &\text{if }w \sim^\pi wv, \\
            * &\text{if }\impred{\pi}{w}{wv}.
        \end{cases} 
    \end{align*}
\end{definition}

\begin{definition}
    Let \(\pi \in \fftree\) and \(w \in \roots{\pi}\). We define the \emph{subtree of \(\pi\) generated by \(w\)}, denoted \(\subtree{\pi}{w}\), as the finite-fragmented tree \((\nodes{},\ell{},\sim)\) where:
    \begin{align*}
        &\nodes{} = \{ v \in \nat^* \mid wv \in \nodes{\pi}\}, \\
        &\ell(v) = \ell^\pi(wv), \\
        &v \sim u \text{ iff } wv \sim^\pi wu.
    \end{align*}
\end{definition}

Roots and tree fragments characterize finite-fragmented trees:

\begin{lemma}
    \label{lm:characterization-of-equality}
    Let \(\pi_0,\pi_1 \in \fftree\) such that \(\roots{\pi_0} = \roots{\pi_1}\) and for any \(w \in \roots{\pi_0} = \roots{\pi_1}\), \(\fragment{\pi_0}{w} = \fragment{\pi_1}{w}\). Then \(\pi_0 = \pi_1\).
\end{lemma}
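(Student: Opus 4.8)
The plan is to show that the data $\bigl(\roots{\pi},(\fragment{\pi}{w})_{w\in\roots{\pi}}\bigr)$ completely determine $\pi$, so that the two hypotheses force $\pi_0=\pi_1$. Recall that $\pi=(\tau^\pi,\fragmentation{\pi})$ is fixed by three pieces of data: the node set $\nodes{\pi}$, the labelling $\ell^\pi$, and the partition $\fragmentation{\pi}$ (equivalently the relation $\sim^\pi$). Hence it suffices to prove that these three coincide for $\pi_0$ and $\pi_1$. Once $\nodes{\pi_0}=\nodes{\pi_1}$ and $\ell^{\pi_0}=\ell^{\pi_1}$ hold, the underlying trees agree, since a tree is just its prefix-closed node set together with its labelling (arities being determined by the node set); and if in addition $\fragmentation{\pi_0}=\fragmentation{\pi_1}$ then $\pi_0=\pi_1$.

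The central step is to reconstruct each equivalence class from its fragment. Write $R=\roots{\pi_0}=\roots{\pi_1}$, and for $w\in R$ let $C^\pi_w=\{x\in\nodes{\pi}\mid {^{\pi}\sqrt{x}}=w\}$ be the $\sim^\pi$-class with root $w$. I claim
\[
C^\pi_w=\{\, wv \mid v\in\nodes{\fragment{\pi}{w}} \text{ and } \ell^{\fragment{\pi}{w}}(v)\neq * \,\}.
\]
Indeed, by definition of $\fragment{\pi}{w}$ its non-$*$ nodes are exactly the $v$ with $w\sim^\pi wv$, while its $*$-labelled nodes are the immediate successor roots $u$ with $\impred{\pi}{w}{wu}$; translating the former by $w$ yields precisely the class of $w$, using that $w$, being a root, is the $\worder$-minimum of its own class so that ${^{\pi}\sqrt{wv}}=w$ whenever $w\sim^\pi wv$. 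The point of this identity is that its right-hand side depends only on $R$ and on the fragment $\fragment{\pi}{w}$, data that by hypothesis agree for $\pi_0$ and $\pi_1$. Consequently $C^{\pi_0}_w=C^{\pi_1}_w$ for every $w\in R$.

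From this everything follows quickly. First, the map $w\mapsto C^\pi_w$ is a bijection from $R$ onto $\fragmentation{\pi}$ (each class has a unique root, distinct classes have distinct roots), so the equality of all classes gives $\fragmentation{\pi_0}=\fragmentation{\pi_1}$. Second, $\nodes{\pi}=\bigcup_{w\in R}C^\pi_w$ because $\fragmentation{\pi}$ partitions $\nodes{\pi}$, whence $\nodes{\pi_0}=\nodes{\pi_1}$. Third, for any node $x$ the root $w={^{\pi_0}\sqrt{x}}={^{\pi_1}\sqrt{x}}$ is the same in both (the partitions coincide); writing $x=wv$, the label clause in the definition of $\fragment{\pi_i}{w}$ gives $\ell^{\pi_i}(x)=\ell^{\fragment{\pi_i}{w}}(v)$, and the right-hand sides agree since $\fragment{\pi_0}{w}=\fragment{\pi_1}{w}$, so $\ell^{\pi_0}=\ell^{\pi_1}$. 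This yields $\pi_0=\pi_1$.

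I expect the main obstacle to be the verification of the displayed class identity, and in particular checking that translating the non-$*$ part of $\fragment{\pi}{w}$ back by $w$ recovers exactly the equivalence class of $w$ and nothing more. This is where the Root fragmentation property (so that $w$ genuinely records that class) and the fact that the $*$-leaves are the disjoint immediate-successor roots (so that they contribute to other classes, not to $C^\pi_w$) must be used. Note that no induction on fragmentation height is needed: the reconstruction proceeds class by class, and the remaining steps are bookkeeping about the partition and the labelling.
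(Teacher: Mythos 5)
Your proof is correct and follows essentially the same route as the paper: both arguments hinge on the fact that, by the definition of tree fragment together with the Root property, the non-$*$ part of \(\fragment{\pi}{w}\) translated by \(w\) recovers exactly the \(\sim^\pi\)-class of \(w\) and its labels, from which equality of node sets, labellings, and fragmentations follows. You merely package this as a single displayed class-reconstruction identity and derive the three equalities from it, whereas the paper verifies the same three equalities pointwise; the content is identical.
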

\begin{proof}
    Let \(w \in \nodes{\pi_0}\), define \(v = \sqrt[\pi_0]{w}\) so \(w = vu\) for some \(u\) and \(u \in \pnodes{\fragment{\pi_0}{v}} = \pnodes{\fragment{\pi_1}{v}}\). By definition of tree fragment \(w = vu \in \nodes{\pi_1}\). So \(\nodes{\pi_0} \subseteq \nodes{\pi_1}\), the other direction is analogous so \(\nodes{\pi_0} = \nodes{\pi_1}\). Also, using the definition of tree fragment \(\ell^{\pi_0}(w) = \ell^{\fragment{\pi_0}{v}}(u) = \ell^{\fragment{\pi_1}{v}}(u) = \ell^{\pi_1}(w)\). All left to show is that \(w_0 \sim^{\pi_0} w_1\) iff \(w_0 \sim^{\pi_1} w_1\), we show the left to right direction the other being analogous.

    Let \(w_0 \sim^{\pi_0} w_1\) and define \(v = \sqrt[\pi_0]{w_0} = \sqrt[\pi_0]{w_1}\) so \(w_0 = v u_0\) and \(w_1 = v u_1\) for some \(u_0,u_1\). By definition of tree fragment \(u_0,u_1\in \pnodes{\fragment{\pi_0}{v}} = \pnodes{\fragment{\pi_1}{v}}\) so \(w_0 \sim^{\pi_1} v \sim^{\pi_1} w_1\), as desired.
\end{proof}

We state some basic properties of these constructions.

\begin{lemma}
    \label{lm:fundamental-properties-tree}
    Let \(\pi \in \fftree\) and 
    $w \in \roots{\pi}$. We have that:
        \begin{enumerate}
            \item  \(v \in \roots{\subtree{\pi}{w}}\) iff \(wv \in \roots{\pi}\). 
            \item \(\fragment{\subtree{\pi}{w}}{v} = \fragment{\pi}{wv}\).
            \item \(\subtree{\subtree{\pi}{w}}{v} = \subtree{\pi}{wv}\).
        \end{enumerate}
\end{lemma}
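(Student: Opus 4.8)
The plan is to observe that all three statements express one and the same phenomenon: passing from \(\pi\) to \(\subtree{\pi}{w}\) merely strips the prefix \(w\) from every node \(wx\) of \(\pi\), and this prefix shift \(x \mapsto wx\) is an order isomorphism onto \(\nodes{\subtree{\pi}{w}}\) that transports the labelling and the equivalence relation, hence all derived notions. The genuine content sits in part 1; once it supplies the correspondence between roots, parts 2 and 3 follow by unwinding definitions and the associativity \(w(vv') = (wv)v'\).

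For part 1, I would first prove the key auxiliary fact: whenever \(wv \in \nodes{\pi}\), every node in the \(\sim^\pi\)-equivalence class of \(wv\) has \(w\) as a prefix. Writing \(r = \sqrt[\pi]{wv}\) for the root of that class, both \(r\) and \(w\) are prefixes of \(wv\) and hence comparable. If \(w \worder r\), then \(w\) is a prefix of the whole class because \(r\) is, by the Root property. Otherwise \(r \worders w \worder wv\), and Convexity places \(w\) in the class of \(wv\); but then this class coincides with the class of the root \(w\), whose minimum is \(w\), contradicting \(r \worders w\). So \(w \worder r\) always holds. Consequently \(u \mapsto wu\) is an order isomorphism from the \(\sim^{\pi'}\)-class of \(v\) onto the \(\sim^{\pi}\)-class of \(wv\), where \(\pi' := \subtree{\pi}{w}\) and we use that \(v \sim^{\pi'} u\) iff \(wv \sim^\pi wu\). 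Order isomorphisms preserve minima, giving \(\sqrt[\pi]{wv} = w\,\sqrt[\pi']{v}\), so \(v = \sqrt[\pi']{v}\) iff \(wv = \sqrt[\pi]{wv}\), that is, \(v \in \roots{\pi'}\) iff \(wv \in \roots{\pi}\). I expect this auxiliary fact --- that a class cannot dip below the root \(w\) --- to be the main obstacle, since it is the only point where the fragmentation properties and the hypothesis \(w \in \roots{\pi}\) are genuinely used.

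With part 1 in hand, part 2 is a comparison of two finite trees with non-wellfounded leaves. On proper nodes they agree immediately: \(v \sim^{\pi'} vv'\) iff \(wv \sim^\pi wvv'\) by the definition of \(\sim^{\pi'}\), and the labels match since \(\ell^{\pi'}(vv') = \ell^\pi(wvv')\). For the non-wellfounded leaves I must check \(\impred{\pi'}{v}{vu}\) iff \(\impred{\pi}{wv}{wvu}\). Writing a forbidden intermediate root as \(vu''\) respectively \(wvu''\) and invoking part 1 (a word \(vu''\) is a root of \(\pi'\) iff \(wvu''\) is a root of \(\pi\)), the two conditions become the same quantified statement over the nonempty proper prefixes \(u''\) of \(u\). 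Hence the node sets and labellings of \(\fragment{\pi'}{v}\) and \(\fragment{\pi}{wv}\) coincide.

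Finally, part 3 requires no new ideas: unfolding the definition of subtree twice gives \(\nodes{\subtree{\pi'}{v}} = \{v' \mid vv' \in \nodes{\pi'}\} = \{v' \mid wvv' \in \nodes{\pi}\} = \nodes{\subtree{\pi}{wv}}\), and the same rewriting shows that \(\ell\) and \(\sim\) of \(\subtree{\pi'}{v}\) and \(\subtree{\pi}{wv}\) agree verbatim, so the two finite-fragmented trees are identical. Throughout I would keep in mind that each statement presupposes that its left-hand side is defined, namely \(v \in \roots{\pi'}\), which by part 1 is exactly the condition \(wv \in \roots{\pi}\) needed on the right, so no instance is ill-formed or vacuous.
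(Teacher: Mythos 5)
Your proof is correct and takes essentially the same route as the paper: the crux --- showing via comparability of prefixes, convexity, and the hypothesis that \(w\) is a root that the \(\sim^\pi\)-class of \(wv\) cannot dip below \(w\) --- is exactly the paper's argument for part 1, and the paper likewise treats parts 2 and 3 as mere unfolding of definitions. Your packaging of the two directions of part 1 into a single order-isomorphism statement is only a cosmetic reorganization of the same argument.
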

\begin{proof}
To show (i) we just note that \([v]_{\sim^{\subtree{\pi}{w}}} = \{ u \mid  wu \in [wv]_{\sim^\pi}\}\) (1). It is clear then that if \(wv\) is the minimum of \([wv]_{\sim^\pi}\) then \(v\) is the minimum of \([v]_{\sim^{\subtree{\pi}{w}}}\), which shows right to left. For the other direction let \(v \in \roots{\subtree{\pi}{w}}\), i.e. \(v\) is the minimum of \([v]_{\sim^{\subtree{\pi}{w}}}\) and let \(u \in [wv]_{\sim^\pi}\). By the root property there must be a \(u' = \sqrt[\pi]{[wv]_{\sim^\pi}}\) with \(u' \worder u\) and \(u' \worder wv\) (2). (2) implies that either \(w\worder u'\) or \(u' \worders w\), the second case being impossible since by convexity using that \(u' \worders w \worder wv\) we get \(w \in [u']_{\sim^\pi} = [wv]_{\sim^\pi}\) but \(w\) is a root so \(w \worder s\). Since \(w \worder u' \worder u\) there is \(u_0\) such that \(wu_0 = u\) and by (1) \(u_0 \in [v]_{\sim^{\fragment{\pi}{w}}}\) so \(v \worder u_0\) and then \(wv \worder wu_0 = u\), as desired. The equalities (ii) and (iii) are just by unfolding definitions.
\end{proof}

With these constructions we can put a coalgebra structure onto \(\fftree\).

\begin{definition}
    We define the function \(\destruct : \fftree \longrightarrow \treefun(\fftree)\) as:
    \[
    \pi \mapsto (\fragment{\pi}{\epsilon}, w \mapsto \subtree{\pi}{w})
    \]
    We will usually omit the \(\destruct\) when we write \(\rootsp{\destruct, \pi}\), \(\gfragment{\destruct}{\pi}{r}\), \(\subelement{\destruct}{\pi}{r}\), and so on.
\end{definition}
In words, given a finite fragmented tree we can destruct it into a finite fragment at its root, which is a tree with non-wellfounded leaves, and an attachment of finite-fragmented trees to the non-wellfounded leaves of the finite fragment.

Note that right now we have a duplication of concepts for finite-fragmented trees. We have roots, tree fragments and subtrees and from its coalgebraic structure with \(\destruct\) we also have root-paths, fragments and subelements. The following definition and the subsequent lemma make clear that these concepts are in fact analogous (which justifies the naming of the coalgebraic concepts).

\begin{definition}
    Let \((C,\alpha)\) be a coalgebra, \(c \in C\) and \(r \in \rootsp{\alpha, c}\). We define the \emph{word of $r$} recursively in $r$ as:
    \begin{align*}
        &\word{\nil} = \epsilon,
        &\word{w{:}r} = w \word{r}.
    \end{align*}
    We note that it is easy to show \(\word{r{:}w} = \word{r}w\).

    Given \(\pi \in \fftree\) and \(w \in \roots{\pi}\) it can be shown that there exists an unique sequence\footnote{For uniqueness just note that if \(\impred{\pi}{w}{u}\) and \(\impred{\pi}{v}{u}\) then \(w = v\)} \(w_0,\ldots,w_n \in \roots{\pi}\) such that \(\epsilon = \impred{\pi}{w_0}{\impred{\pi}{\cdots}{w_n}} = w\). We define \(\rootp{\pi}{w} = [w_1, w_2 - w_1, \ldots,w_n - w_{n-1}]\).
\end{definition}

\begin{lemma}
    \label{lm:correspondance-of-coalgebra-and-representation-operations}
    Let \(\pi \in \fftree\), we have that:
    \begin{align*}
        &\textup{(i) }\text{If \(r \in \rootsp{\pi}\), then:} & &\textup{(ii) }\text{If \(w \in \roots{\pi}\), then:}\\
        &\hspace{0.4cm}\textup{(a) }\word{r} \in \roots{\pi}, 
        &&\hspace{0.5cm}\textup{(a) }\rootp{\pi}{w} \in \rootsp{\pi},\\
        &\hspace{0.4cm}\textup{(b) }\rootp{\pi}{\word{r}} = r, 
        &&\hspace{0.5cm}\textup{(b) }\word{\rootp{\pi}{w}} = w,\\
        &\hspace{0.4cm}\textup{(c) }\gfragment{}{\pi}{r} = \fragment{\pi}{\word{r}}, 
        &&\hspace{0.5cm}\textup{(c) }\fragment{\pi}{w} = \gfragment{}{\pi}{\rootp{\pi}{w}},\\
        &\hspace{0.4cm}\textup{(d) }\subelement{}{\pi}{r} = \subtree{\pi}{\word{r}}.
        &&\hspace{0.5cm}\textup{(d) }\subtree{\pi}{w} = \subelement{}{\pi}{\rootp{\pi}{w}}.
    \end{align*}
\end{lemma}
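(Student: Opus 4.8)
The plan is to prove (i) by induction on \(r\) and then to read off (ii) from (i); the one genuinely new ingredient is a recursion law for \(\rootp{\pi}{-}\) under the subtree operation, which is the technical heart of the argument.

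First I would unfold \(\destruct\). Since \(\destruct(\pi)=(\fragment{\pi}{\epsilon},\,w\mapsto\subtree{\pi}{w})\), we have \(\iota^{\destruct}_\pi=\fragment{\pi}{\epsilon}\) and \(\mu^{\destruct}_\pi(w)=\subtree{\pi}{w}\), and by the definition of tree fragment \(w\in\nwleaf{\fragment{\pi}{\epsilon}}\) iff \(\impred{\pi}{\epsilon}{w}\). This settles every base case: for \(r=\nil\) we have \(\word{\nil}=\epsilon\in\roots{\pi}\) (the global minimum is the root of its class), \(\rootp{\pi}{\epsilon}=\nil\), \(\gfragment{}{\pi}{\nil}=\fragment{\pi}{\epsilon}\) and \(\subelement{}{\pi}{\nil}=\pi=\subtree{\pi}{\epsilon}\), giving (i)(a)--(d) at \(\nil\).

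The key step, call it \((\ast)\), is: if \(\impred{\pi}{\epsilon}{u}\) and \(v\in\roots{\subtree{\pi}{u}}\), then \(\rootp{\pi}{uv}=u{:}\rootp{\subtree{\pi}{u}}{v}\). Let \(\epsilon=w_0,\ldots,w_n=uv\) be the unique chain with \(\impred{\pi}{w_{i-1}}{w_i}\) that defines \(\rootp{\pi}{uv}\). Since \(u\) and \(w_1\) are both roots \(\worder uv\) that are immediate successors of \(\epsilon\), comparability of prefixes of \(uv\) together with the absence of roots strictly between \(\epsilon\) and each of them forces \(w_1=u\). Writing \(w_i=u a_i\) (so \(a_1=\epsilon\), \(a_n=v\)), the crucial substep is that \(\impred{\pi}{ua}{ub}\) iff \(\impred{\subtree{\pi}{u}}{a}{b}\) for roots \(a,b\) of \(\subtree{\pi}{u}\); this follows from Lemma~\ref{lm:fundamental-properties-tree}(i) (the roots of \(\subtree{\pi}{u}\) are exactly the \(c\) with \(uc\in\roots{\pi}\)) together with the fact that every node between \(ua\) and \(ub\) extends \(u\). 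Hence \(\epsilon=a_1,\ldots,a_n=v\) is the chain defining \(\rootp{\subtree{\pi}{u}}{v}\); as \(w_1=u\) and \(w_i-w_{i-1}=a_i-a_{i-1}\), comparing the two lists yields \((\ast)\). I expect this to be the main obstacle, since it is the only place where the order-theoretic content of \(\impred{}{}{}\) interacts with \(\subtree{}{}\).

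With \((\ast)\) in hand, (i) follows by induction. If \(r=u{:}s\in\rootsp{\pi}\) then \(u\in\nwleaf{\fragment{\pi}{\epsilon}}\) and \(s\in\rootsp{\subtree{\pi}{u}}\), so the induction hypothesis applies to \(\subtree{\pi}{u}\) and \(s\): part (a) combines hypothesis (a) with Lemma~\ref{lm:fundamental-properties-tree}(i) to give \(\word{r}=u\word{s}\in\roots{\pi}\); parts (c),(d) unfold the recursions for \(\gfragment{}{}{}\) and \(\subelement{}{}{}\), apply the hypothesis, and close via Lemma~\ref{lm:fundamental-properties-tree}(ii),(iii); part (b) computes \(\rootp{\pi}{\word{r}}=\rootp{\pi}{u\word{s}}=u{:}\rootp{\subtree{\pi}{u}}{\word{s}}=u{:}s=r\), where \((\ast)\) is legitimised by hypothesis (a) and the last equality is hypothesis (b). Finally, for (ii): \(\word{\rootp{\pi}{w}}=w\) holds by telescoping the defining chain of \(\rootp{\pi}{w}\), which is (ii)(b); then (ii)(a) follows by induction on \(\fheight{\pi}{w}\) using \((\ast)\) and the recursive clause defining root-paths (in the step \(w=uv\) with \(\impred{\pi}{\epsilon}{u}\), one has \(\rootp{\subtree{\pi}{u}}{v}\in\rootsp{\subtree{\pi}{u}}\) by induction and \(u\in\nwleaf{\fragment{\pi}{\epsilon}}\)); and (ii)(c),(d) drop out by instantiating (i)(c),(d) at \(r=\rootp{\pi}{w}\) and rewriting \(\word{\rootp{\pi}{w}}=w\) via (ii)(b).
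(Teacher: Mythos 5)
Your proof is correct, but it runs the induction in the opposite direction from the paper, and this changes which technical facts carry the load. The paper proves (i) by induction on $|r|$ peeling the \emph{last} entry ($r{:}w$), so the induction hypothesis stays inside the same tree $\pi$; the needed interaction between $\rootp{\pi}{\cdot}$ and the immediate-successor relation then only appears in the easy end-extension form ($w \in \nwleaf{\gfragment{}{\pi}{r}}$ gives $\impred{\pi}{\word{r}}{\word{r}w}$, which appends one element to the defining chain), while parts (c) and (d) are handled by the concatenation identities of Lemma~\ref{lm:fundamental-properties}(1) combined with Lemma~\ref{lm:fundamental-properties-tree}. Likewise, for (ii) the paper peels the immediate \emph{predecessor} root $v$ with $\impred{\pi}{v}{w}$, again staying in $\pi$. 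You instead peel the \emph{first} entry ($r = u{:}s$, respectively the first root $u$ after $\epsilon$), applying the induction hypothesis to the different tree $\subtree{\pi}{u}$. This head decomposition matches the head-recursive definitions of root-paths, $\gfragment{}{}{}$ and $\subelement{}{}{}$, so your (a), (c), (d) reduce to definition-unfolding plus Lemma~\ref{lm:fundamental-properties-tree}, with no appeal to Lemma~\ref{lm:fundamental-properties}; the price is your lemma $(\ast)$, $\rootp{\pi}{uv} = u{:}\rootp{\subtree{\pi}{u}}{v}$, whose chain analysis (forcing $w_1 = u$, and the transfer $\impred{\pi}{ua}{ub}$ iff $\impred{\subtree{\pi}{u}}{a}{b}$) is exactly the order-theoretic work the paper's tail decomposition avoids. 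Both arguments are sound and of comparable length; yours isolates the difficulty in a single reusable transfer lemma, the paper's leans on its previously established concatenation machinery. Two small points you leave implicit and should state: the induction in (i) must quantify over all finite-fragmented trees (since the hypothesis is invoked at $\subtree{\pi}{u}$, not at $\pi$), and the induction in (ii)(a) needs $\fheight{\subtree{\pi}{u}}{v} < \fheight{\pi}{uv}$, which holds because the roots of $\pi$ strictly below $uv$ are $\epsilon$ together with the nodes $uc$ for $c \in \roots{\subtree{\pi}{u}}$, $c \worders v$.
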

\begin{proof}
    Proof of (i). We show the four statements simultaneously by induction in $|r|$. The \(\nil\) case is easy, so assume we have \(r{:}w\). First, since \(r{:}w \in \rootsp{}{\pi}\) we have that \(r \in \rootsp{}{\pi}\) and then we know that \(r{:} w \in \rootsp{}{\pi}\) implies that \(w \in \nwleaf{\gfragment{}{\pi}{r}} \overset{\text{I.H.}}{=} \nwleaf{\fragment{\pi}{\word{r}}}\). By definition of tree-fragment this means that \(\impred{\pi}{\word{r}}{\word{r}w}\) (1), so \(\word{r{:}w} = \word{r}w \in \roots{\pi}\).

    To show (b) we have to show that \(\rootp{\pi}{\word{r{:}w}} = r{:}w\), so let \(r = [w_0,\ldots,w_{n-1}]\) i.e. \(\word{r} = w_0\cdots w_{n-1}\). By I.H. we have that \(\rootp{\pi}{\word{r}} = r\) which implies that \(\impred{\pi}{\epsilon}{\impred{\pi}{w_0}{\impred{\pi}{w_0w_1}{\impred{\pi}{\cdots}{w_0\cdots w_{n-1}}}}}\). It suffices to show that \(w_0\cdots w_{n-1} = \impred{\pi}{\word{r}}{\word{r{:}w}} = w_0\cdots w_{n-1}w\). This is simply (1), showed above.

    To show (c) we just use the following reasoning:
    \begin{multline*}
    \gfragment{}{\pi}{r{:}w} \overset{(\text{2})}{=} \gfragment{}{\subelement{}{\pi}{r}}{[w]} \overset{\text{I.H.}}{=} \gfragment{}{\subtree{\pi}{\word{r}}}{[w]} \overset{(\text{3})}{=} \gfragment{}{\subtree{\subtree{\pi}{\word{r}}}{w}}{\nil} \\ \overset{(\text{4})}{=}
    \gfragment{}{\subtree{\pi}{\word{r}w}}{\nil} \overset{(\text{3})}{=} \fragment{\subtree{\pi}{\word{r}w}}{\epsilon}
    \overset{(\text{4})}{=} \fragment{\pi}{\word{r}w} = \fragment{\pi}{\word{r{:}w}},
    \end{multline*}
    where (2) is thanks to Lemma \ref{lm:fundamental-properties}, (3) are just unfolding of definitions and (4) are thanks to Lemma \ref{lm:fundamental-properties-tree}.

    Finally (d) is analogous to (c):
    \begin{multline*}
        \subelement{}{\pi}{r{:}w} \overset{(\text{5})}= \subelement{}{\subelement{}{\pi}{r}}{[w]} \overset{\text{I.H.}}{=} \subelement{}{\subtree{\pi}{\word{r}}}{[w]} \overset{(\text{6})}{=} \subtree{\subtree{\pi}{\word{r}}}{w} \\
        \overset{(\text{7})}{=} \subtree{\pi}{\word{r}w} = \subtree{\pi}{r{:}w}.
    \end{multline*}
    where (5) is thanks to Lemma \ref{lm:fundamental-properties}, (6) are just unfolding of definitions and (7) are thanks to Lemma \ref{lm:fundamental-properties-tree}.

    Proof of (ii). We show the four statements simultaneously by induction in $\fheight{\pi}{w}$. If \(\fheight{\pi}{w} = 0\) then \(w = \epsilon\) and the proof is easy, let us assumme that \(\fheight{\pi}{w} = n+1\), so there is \(v \in \roots{\pi}\) such that \(\fheight{\pi}{v} = n\) and \(\impred{\pi}{v}{w}\). Let \(u\) be such that \(w = vu\), we note that \(u \in \nat^+\) since \(v \worders w\). By I.H. \(\rootp{\pi}{v} \in \rootsp{}{\pi}\) and by definiton of fragment tree \(u \in \fragment{\pi}{v} \overset{\text{I.H}}{=} \gfragment{}{\pi}{\rootp{\pi}{v}}\), so by Lemma \ref{lm:fundamental-properties} we get \(\rootp{\pi}{v} {:} u \in \rootsp{}{\pi}\). But \(\rootp{\pi}{v} {:} u = \rootp{\pi}{w}\) thanks to \(\impred{\pi}{v}{w}\).

    To show (b) we remember that \(\rootp{\pi}{w} = \rootp{\pi}{v} {:} u\) and \(\word{\rootp{\pi}{v}} \overset{\text{I.H.}}{=} v\). Then \(\word{\rootp{\pi}{w}} = \word{\rootp{\pi}{v}{:} u} = \word{\rootp{\pi}{v}}u = vu = w\), as desired. Finally, (c) and (d) are easy to derive using that we already shown (i), (ii)(a) and (ii)(b).
\end{proof}

\subsection{Finite-fragmented trees as a final coalgebra}

We establish the desired result that \((\fftree,\destruct)\) is a final coalgebra of \(\treefun\). This will allow us to define functions to \(\fftree\) by corecursion. First, we state two technical lemmas.

\begin{lemma}
    \label{lm:about-root-paths}
    Let \((C,\alpha)\) be a coalgebra, \(c \in C\), \(r,s \in \rootsp{\alpha, c}\). Then:
    \begin{enumerate}
        \item \(|r| = |s|\) and \(r \neq s\) implies \(\disjoint{\word{r}}{\word{s}}\).
        \item \(|r| < |s|\) then either \(\disjoint{\word{r}}{\word{s}}\) or \(r \lorders s\).
        \item Then \(\word{r} \worder \word{s}\) implies \(r \lorder s\).
    \end{enumerate}
\end{lemma}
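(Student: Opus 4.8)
The plan is to reduce everything to one elementary geometric fact about the word order and then run two parallel inductions, from which the third statement drops out by a length analysis. The fact I will use throughout is that \emph{two distinct non-wellfounded leaves of the same finite tree are \(\worder\)-disjoint as words}: since \(\nwleaf{\iota} \subseteq \leaf{\iota}\), both are \(\worder\)-maximal nodes of \(\iota\), distinct maximal words are \(\worder\)-incomparable, and for words incomparability coincides with disjointness (a common \(\worder\)-upperbound of \(w,v\) would make one a prefix of the other). I will also use three trivial observations on words: prepending a fixed word \(w\) preserves and reflects \(\worder\) and disjointness (i.e. \(wa \worder wb\) iff \(a \worder b\), and \(\disjoint{wa}{wb}\) iff \(\disjoint{a}{b}\)); disjointness propagates upward (if \(\disjoint{w}{v}\), \(w \worder a\) and \(v \worder b\) then \(\disjoint{a}{b}\)); and \(\word{r{:}w} = \word{r}w\) together with \(\word{w{:}r} = w\word{r}\), so that the first word of a nonempty root-path is a \(\worder\)-prefix of its word.

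For statement (i) I proceed by induction on the common length \(n = |r| = |s|\), with the coalgebra element universally quantified. The case \(n = 0\) is vacuous, since then \(r = s = \nil\) contradicts \(r \neq s\). For \(n \geq 1\) write \(r = w{:}r'\) and \(s = v{:}s'\); by the definition of root-path, \(w, v \in \nwleaf{\iota^\alpha_c}\) and \(r', s'\) are root-paths of \(\mu^\alpha_c(w)\) and \(\mu^\alpha_c(v)\). If \(w \neq v\) then \(w,v\) are distinct non-wellfounded leaves of the single tree \(\iota^\alpha_c\), hence disjoint; since \(w \worder \word{r}\) and \(v \worder \word{s}\), upward propagation gives \(\disjoint{\word{r}}{\word{s}}\). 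If \(w = v\) then \(r' \neq s'\) are root-paths of \(\mu^\alpha_c(w)\) of common length \(n-1\), so the induction hypothesis yields \(\disjoint{\word{r'}}{\word{s'}}\), and prepending \(w\) gives \(\disjoint{\word{r}}{\word{s}}\) because \(\word{r} = w\word{r'}\) and \(\word{s} = w\word{s'}\).

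Statement (ii) is an entirely parallel induction on \(|r|\). If \(|r| = 0\) then \(r = \nil \lorders s\) because \(|s| > 0\). Otherwise write \(r = w{:}r'\) and \(s = v{:}s'\) (legitimate since \(|s| > |r| \geq 1\)); if \(w \neq v\) the leaves are disjoint and again \(\disjoint{\word{r}}{\word{s}}\), while if \(w = v\) the induction hypothesis applied to \(r', s'\) (root-paths of \(\mu^\alpha_c(w)\) with \(|r'| < |s'|\)) gives either \(\disjoint{\word{r'}}{\word{s'}}\), whence \(\disjoint{\word{r}}{\word{s}}\) after prepending \(w\), or \(r' \lorders s'\), whence \(r = w{:}r' \lorders w{:}s' = s\).

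Finally, (iii) follows from (i) and (ii) by a trichotomy on lengths. The hypothesis \(\word{r} \worder \word{s}\) makes \(\word{r}, \word{s}\) comparable, so the disjoint alternative is always excluded. If \(|r| = |s|\), then by (i) the case \(r \neq s\) is impossible, so \(r = s\) and \(r \lorder s\). If \(|r| < |s|\), then by (ii) only \(r \lorders s\) remains, giving \(r \lorder s\). The case \(|r| > |s|\) cannot occur under the hypothesis: applying (ii) to \(s, r\) would force \(s \lorders r\), which, the component words being nonempty, yields \(\word{s} \worders \word{r}\), contradicting \(\word{r} \worder \word{s}\); thus this case is vacuous. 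The only genuine input is the disjointness of distinct leaves; the sole mild obstacle is bookkeeping, namely stating the inductions uniformly over all coalgebra elements \(c\) so the descent to \(\mu^\alpha_c(w)\) is legitimate, and keeping the strict and non-strict versions of \(\worder\) and \(\lorder\) straight.
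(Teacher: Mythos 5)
Your proof is correct and takes essentially the same approach as the paper's (which only sketches the argument): induction on the length of root-paths, driven by the single key fact that distinct leaves of a finite tree are disjoint words, followed by a case split on lengths for (iii). The only minor deviation is in (ii), where you run a parallel induction on $|r|$ rather than the paper's induction on $|s|-|r|$ invoking (i); both variants are sound.
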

\begin{proof}
    (i) is proved by induction in \(|r|\) using Lemma \ref{lm:fundamental-properties} and that two distinct leaves of the same finite tree must be disjoint. (ii) is proved by induction on the difference \(|s| - |r|\) using (i). (iii) is a consequence of (i) and (ii) by cases in \(|r| = |s|, |r| < |s|\) or \(|s| < |r|\).
\end{proof}

\begin{lemma}
    \label{lm:disjointness}
    Let \((C,\alpha)\) be a coalgebra and \(c \in C\). For any \(r,s \in \rootsp{\alpha, c}\) distinct, we have that: \((\word{r} \circ \pnodes{\gfragment{\alpha}{c}{r}}) \cap (\word{s} \circ \pnodes{\gfragment{\alpha}{c}{s}})= \varnothing\).
\end{lemma}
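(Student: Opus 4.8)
The plan is to reduce everything to the combinatorics of the words $\word{r},\word{s}$ together with the structural fact that non-wellfounded leaves are genuine (maximal) leaves of a fragment. First I would invoke Lemma \ref{lm:about-root-paths} to obtain a trichotomy: since $r \neq s$, comparing $|r|$ with $|s|$ and applying part (i) when they are equal and part (ii) (also with the roles of $r,s$ swapped) when they differ, exactly one of the following holds: $\disjoint{\word{r}}{\word{s}}$, or $r \lorders s$, or $s \lorders r$. The third case is symmetric to the second, so it suffices to treat the first two.

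In the disjoint case I would argue purely at the level of words. Here $\disjoint{\word{r}}{\word{s}}$ means $\word{r}$ and $\word{s}$ are $\worder$-incomparable. Suppose towards a contradiction that $\word{r}p = \word{s}q$ with $p \in \pnodes{\gfragment{\alpha}{c}{r}}$ and $q \in \pnodes{\gfragment{\alpha}{c}{s}}$, and assume w.l.o.g. $|\word{r}| \le |\word{s}|$. Then $\word{r}$ is a prefix of $\word{s}q$ of length at most $|\word{s}|$, hence it is a prefix of $\word{s}$ itself, contradicting incomparability. Thus no common element exists and the intersection is empty.

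For the case $r \lorders s$ I would use that the fragment at $r$ is exactly where the root-path branches off toward $s$. Let $w$ be the first word of the tail of $s$ after $r$, so that $r{:}w \lorder s$; by Lemma \ref{lm:fundamental-properties} (prefix-closure of root-paths, part (iii), and the characterization of one-step extensions, part (ii)) we get $r{:}w \in \rootsp{\alpha, c}$ and therefore $w \in \nwleaf{\gfragment{\alpha}{c}{r}}$. Moreover, directly from the recursive definition of $\word{\cdot}$ and the identity $\word{r{:}w} = \word{r}w$, the word $\word{s}$ has $\word{r}w$ as a prefix, say $\word{s} = \word{r}we$ for some word $e$. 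Now suppose $\word{r}p = \word{s}q = \word{r}weq$ for some $p \in \pnodes{\gfragment{\alpha}{c}{r}}$ and $q \in \pnodes{\gfragment{\alpha}{c}{s}}$; cancelling the common prefix $\word{r}$ gives $p = weq$, so $w \worder p$. But $p$ is a proper node of $\gfragment{\alpha}{c}{r}$ while $w$ is a non-wellfounded leaf of the same fragment, hence $p \neq w$ and so $w \worders p$; this contradicts the defining property of finite trees with non-wellfounded leaves (condition (ii)), namely that $w$ is a leaf of $\gfragment{\alpha}{c}{r}$ and thus has no node strictly above it. Hence the intersection is empty here as well.

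The main obstacle, and the conceptual heart of the argument, is the case $r \lorders s$: one must pin down that the first step of $s$ beyond $r$ lands on a non-wellfounded leaf of the fragment at $r$, and then exploit the maximality of such leaves to rule out that any proper node of that fragment lies above it. The remaining bookkeeping—translating $\lorder$ on root-paths into $\worder$ on their words through the recursive definition of $\word{\cdot}$, and the elementary prefix argument in the disjoint case—is routine.
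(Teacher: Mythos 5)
Your proof is correct and follows essentially the same route as the paper's: a case split via Lemma \ref{lm:about-root-paths}, ruling out the disjoint case because a common element of the intersection yields a common prefix/upper bound of \(\word{r}\) and \(\word{s}\), and in the prefix case using that \(r{:}w\) is a root-path to place \(w\) in \(\nwleaf{\gfragment{\alpha}{c}{r}}\) and contradict the maximality of leaves against a proper node above \(w\). The only cosmetic difference is that the paper assumes \(|r| \leq |s|\) without loss of generality up front, whereas you state the full trichotomy and dismiss the case \(s \lorders r\) by symmetry.
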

\begin{proof}
    We can assume without loss of generaltity that $|r| \leq |s|$ and let \(v\) belong to the intersection, in particular \(v = \word{r}u_0 = \word{s}u_1\) for some \(u_0 \in \pnodes{\gfragment{\alpha}{c}{r}}\) and \(u_1 \in \pnodes{\gfragment{\alpha}{c}{s}}\). By Lemma \ref{lm:about-root-paths}, either \( \disjoint{\word{r}}{\word{s}}\) or \(r \lorders s\). The first case is impossible since \(v\) is an upperbound of both, so we must have \(r \lorders s\) and then \(s = r[w,\ldots]\) with \(w \worder u_0\). Since \(s\) is a root-path an \(r{:}w \lorder s\) we get that \(r{:}w\) is a root-path which implies that \(w \in \nwleaf{\gfragment{\alpha}{c}{r}}\). Then \(u_0 \in \pnodes{\gfragment{\alpha}{c}{r}}\) and \(w \worder u_0\) implies that \(w \worders u_0\), but \(u_0\) is a node of \(\gfragment{\alpha}{c}{r}\) and \(w\) is a leaf of the same tree, so \(w \worders u_0\) is impossible.
\end{proof}

Finally, we can prove the main theorem of this section.\footnote{Category-theory oriented readers may wonder why we take the burden to prove this. A more categorical approach would have proven the existence of a final coalgebra for \(\mathcal{T}\) and define the finite-fragmented trees as elements of this coalgebra. However, for our purposes of providing translations between proof systems we need to transform (some) non-wellfounded trees to finite-fragmented trees and back. If we take this categorical approach providing this transformation will be equivalent to directly proving that our definition of finite-fragmented tree provides a final coalgebra of \(\mathcal{T}\).}

\begin{theorem}
    \label{th:final-coalgebra}
    \((\fftree,\destruct)\) is a final coalgebra of \(\treefun\). 
\end{theorem}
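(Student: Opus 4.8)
The plan is to verify the universal property of the final coalgebra directly: for every coalgebra $(C,\alpha)$ and every $c\in C$ I must exhibit a unique $\gamma(c)\in\fftree$ making $\gamma$ a coalgebra morphism, i.e. $\destruct\circ\gamma=\treefun(\gamma)\circ\alpha$. Unfolding the definitions of $\destruct$ and $\treefun$, this equation says precisely that $\fragment{\gamma(c)}{\epsilon}=\iota^\alpha_c$ and that $\subtree{\gamma(c)}{w}=\gamma(\mu^\alpha_c(w))$ for every $w\in\nwleaf{\iota^\alpha_c}$. I would split the argument into uniqueness, which dictates what $\gamma(c)$ must be, followed by existence, which realizes that description as a genuine finite-fragmented tree.

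For uniqueness, suppose $\gamma$ is any coalgebra morphism. By Lemma~\ref{lm:fundamental-properties}(iv) root-paths and fragments are preserved, so $\rootsp{\destruct, \gamma(c)}=\rootsp{\alpha, c}$ and $\gfragment{\destruct}{\gamma(c)}{r}=\gfragment{\alpha}{c}{r}$ for each such $r$. Combining this with Lemma~\ref{lm:correspondance-of-coalgebra-and-representation-operations} gives $\roots{\gamma(c)}=\{\word{r}\mid r\in\rootsp{\alpha, c}\}$ and $\fragment{\gamma(c)}{\word{r}}=\gfragment{\alpha}{c}{r}$. Hence both the roots of $\gamma(c)$ and all of its tree fragments are already fixed by the data $(C,\alpha,c)$, so Lemma~\ref{lm:characterization-of-equality} forces any two morphisms to coincide pointwise.

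For existence, I would build $\gamma(c)$ by gluing together all fragments of $c$ along its root-paths. Concretely, take as nodes the set $\bigcup_{r\in\rootsp{\alpha, c}}\word{r}\circ\pnodes{\gfragment{\alpha}{c}{r}}$, label the node $\word{r}u$ by $\ell^{\gfragment{\alpha}{c}{r}}(u)$, and declare the fragmentation to be the family of blocks $\word{r}\circ\pnodes{\gfragment{\alpha}{c}{r}}$ indexed by $r\in\rootsp{\alpha, c}$. Lemma~\ref{lm:disjointness} guarantees that distinct root-paths contribute disjoint blocks, so this is an honest partition and the labelling is unambiguous. Each block is finite (fragments are finite trees) and convex with least element $\word{r}$, since $\epsilon\in\pnodes{\gfragment{\alpha}{c}{r}}$ by condition (i) on finite trees with non-wellfounded leaves; this yields the fragmentation properties and makes $\roots{\gamma(c)}=\{\word{r}\mid r\in\rootsp{\alpha, c}\}$.

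It then remains to check that this data really is a finite-fragmented tree and that it satisfies the morphism equation. Verifying that the node set is closed under $\worder$ is the crux: a prefix of $\word{r}u$ either stays inside the current fragment, where prefix-closure of $\gfragment{\alpha}{c}{r}$ suffices, or it cuts back into $\word{r}$, in which case I decompose $r$ as $[w_0,\dots,w_{n-1}]$ and use Lemma~\ref{lm:fundamental-properties}(ii),(iii) to recognize the relevant initial word as a non-wellfounded leaf of an earlier fragment, thereby exhibiting the prefix as a node coming from a shorter root-path. Once $\gamma(c)\in\fftree$ is established, unfolding definitions shows $\fragment{\gamma(c)}{\word{r}}=\gfragment{\alpha}{c}{r}$ and that $\impred{\gamma(c)}{\word{r}}{\word{r}w}$ holds exactly when $w\in\nwleaf{\gfragment{\alpha}{c}{r}}$; the case $r=\nil$ gives the first morphism condition $\fragment{\gamma(c)}{\epsilon}=\iota^\alpha_c$, and comparing the roots and fragments of $\subtree{\gamma(c)}{w}$ and $\gamma(\mu^\alpha_c(w))$ via Lemmas~\ref{lm:fundamental-properties-tree}, \ref{lm:fundamental-properties}(i) and \ref{lm:about-root-paths}, together with Lemma~\ref{lm:characterization-of-equality}, yields the second. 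I expect the prefix-closure verification to be the main obstacle, as it is the one place where the abstract root-path bookkeeping and the concrete tree structure must be reconciled by hand.
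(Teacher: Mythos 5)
Your proposal is correct and follows essentially the same route as the paper's own proof: the identical glued-fragment construction of $\gamma(c)$ (nodes $\bigcup_{r}\word{r}\circ\pnodes{\gfragment{\alpha}{c}{r}}$, with well-definedness via Lemma~\ref{lm:disjointness}), and the identical uniqueness argument combining preservation of root-paths under coalgebra morphisms (Lemma~\ref{lm:fundamental-properties}), Lemma~\ref{lm:correspondance-of-coalgebra-and-representation-operations}, and Lemma~\ref{lm:characterization-of-equality}. The only cosmetic differences are the order (uniqueness before existence) and that you verify the equation $\subtree{\gamma(c)}{w}=\gamma(\mu^\alpha_c(w))$ by comparing roots and fragments through Lemma~\ref{lm:characterization-of-equality}, where the paper instead checks nodes, labels and fragmentation directly; both are sound.
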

\begin{proof}
    Existence.
    Let \((C,\alpha)\) be a coalgebra. We define \(f : C \longrightarrow \fftree\) such that given \(c \in C\) \(f(c)\) is the finite-fragmented tree \(\pi = (\nodes{},\ell,\fragmentation{})\) where:
    \begin{align*}
        &\nodes{} = \bigcup_{r \in \rootsp{\alpha, c}} \word{r} \circ \pnodes{\gfragment{\alpha}{c}{r}}, \\
        &\ell(w) = \ell^{\gfragment{\alpha}{c}{r}}(u) \text{ where \(r\) is the unique element of \(\rootsp{\alpha, c}\)} 
        \text{ and }\\ 
        &\hspace{1cm}\text{\(u\) is the unique element of  \(\pnodes{\gfragment{\alpha}{c}{r}}\) such that \(v = \word{r}u\),}\\
        &\fragmentation{} = \{ \word{r} \circ \pnodes{\gfragment{\alpha}{c}{r}} \mid r \in \rootsp{\alpha, c}\}.
    \end{align*}
    The labelling is well-defined (i.e. the things asserted to be unique are indeed unique) due to Lemma \ref{lm:disjointness}. Note that $\pi$ is indeed a finite-fragmented tree.

    We need to prove that \(f\) is a coalgebra morphism, i.e.\ we have to show that \(\iota^\alpha_c = \fragment{\pi}{\epsilon}\) and \(f \circ \mu^\alpha_c = (w \in \fragment{\pi}{\epsilon} \mapsto \subtree{\pi}{w})\).
    
    Proof of \(\iota^\alpha_c = \fragment{\pi}{\epsilon}\). By definition the proper nodes of \(\fragment{\pi}{\epsilon}\) is the equivalence class of epsilon in \(\pi\), which in this case is \(\epsilon \circ \pnodes{\iota^\alpha_c}\), with the same labels as in \(\iota^\alpha_c\), The non-wellfounded leaves are the \(w\) such that \(\impred{\pi}{\epsilon}{w}\). To show the desired equality all we need to show is that for any \(w \in \nodes{\pi}\):
    \[
    \impred{\pi}{\epsilon}{w} \text{ if and only if }w \in \nwleaf{\iota^\alpha_c}
    = \nwleaf{\gfragment{\alpha}{c}{\nil}}.
    \]

    Left to right. Since \(w \in \nodes{\pi}\) there must be a \(r \in \rootsp{\alpha}{c}\) such that \(w = \word{r}u\) where \(u \in \pnodes{\gfragment{\alpha}{c}{r}}\). But \(w \in \roots{\pi}\) means that it must be the minimum of its equivalecne class which is \(\word{r} \circ \pnodes{\gfragment{\alpha}{c}{r}}\), since \(\epsilon \in \pnodes{\gfragment{\alpha}{c}{r}}\) we can conclude that \(u = \epsilon\), i.e. \(\word{r} = w\) (which implies that \(|r| > 0\) since \(\epsilon \worders w\)). If \(|r| > 1\) then \(r = [v,\ldots]\) so \([v] \in \rootsp{\alpha}{c}\) and then \(v \in \roots{\pi}\) and \(\epsilon \worders v \worders w\), contradiction. So \(|r| = 1\) and since \(\word{r} = w\) we must have \(r = [w]\), which implies (Lemma \ref{lm:fundamental-properties}) that \(w \in \nwleaf{\gfragment{\alpha}{c}{\nil}}\).

    Right to left. Let \(w \in \nwleaf{\gfragment{\alpha}{c}{\nil}}\), since \(\nil \in \rootsp{\alpha}{c}\) by Lemma \ref{lm:fundamental-properties} we get that \([w] \in \rootsp{\alpha}{c}\). It is easy see then that by definition of \(\pi\), \(\word{[w]} = w \in \roots{\pi}\). We want to show that \(\impred{\pi}{\epsilon}{w}\), assume the contrary. Then there must be a \(v \in \roots{\pi}\) such that \(\epsilon \worders v \worders w\), but then there must be (with a reasoning analogous to the one in the left to right direction to show that \(w = \word{r}\)) a \(s \in \rootsp{\alpha}{c}\) such that \(\word{s} = v\). Since \(\word{s} = v \worders w = \word{[w]}\) by Lemma \ref{lm:about-root-paths} it must be the case that \(s \lorders [w]\), which means that \(s = \nil\). But this is absurd since \(\epsilon = \word{\nil} = \word{s} = v > \epsilon\).

    Let \(w \in \nwleaf{\iota^\alpha_c} = \nwleaf{\fragment{\pi}{\epsilon}}\), we have to show that \(f(\mu^\alpha_c(w)) = \subtree{\pi}{w}\). We note that by definition we have that:
    \[
    r \in \rootsp{\alpha}{\mu^\alpha_c(w)} \text{ iff } w{:}r \in \rootsp{\alpha}{c},
    \]
    \[
    \gfragment{\alpha}{\mu^\alpha_c(w)}{r} = \gfragment{\alpha}{c}{w{:}r}.
    \]
    Then 
    \[
    \nodes{f(\mu^\alpha_c(w))} = \bigcup_{r \in \rootsp{\alpha}{\mu^\alpha_c(w)}} \word{r} \circ\pnodes{\gfragment{\alpha}{\mu^\alpha_c(w)}{r}} =
    \bigcup_{r \in \{r \mid w{:}r \in \rootsp{\alpha}{c}\}}  \word{r} \circ \pnodes{\gfragment{\alpha}{c}{w{:}r}}. 
    \]
    To show that \(\subtree{\pi}{w}\) and \(f(\mu^\alpha_c(w))\) have the same nodes it suffices to prove that:
    \[
    \tag{i}
    \nodes{\subtree{\pi}{w}} =  \bigcup_{r \in \{r \mid w{:}r \in \rootsp{\alpha}{c}\}}  \word{r} \circ \pnodes{\gfragment{\alpha}{c}{w{:}r}} .
    \]
    The right to left inclusion is trivial since it is clear that if \(v\) belongs to the RHS set then \(wv\) belongs to the nodes of \(\pi\) and then \(v\) belongs to the LHS set. Assume that \(v \in \nodes{\subtree{\pi}{w}}\), i.e. \(wv \in \nodes{\pi}\) so there is an \(s \in \rootsp{\alpha}{c}\) and a \(u \in \pnodes{\gfragment{\alpha}{c}{s}}\)  such that \(wv = \word{s}u\). Since \(w \in \nwleaf{\iota^\alpha_c}\) then \([w] \in \rootsp{\alpha}{c}\) and since \(w \worder wv = \word{s}u\) we have that either \(w \worder \word{s}\) or \(\word{s} \worders w\). In the first case, by Lemma \ref{lm:about-root-paths} we have that \([w] \lorder s\) which implies that \(s = {w}{:}s'\) and then \(\word{s'}u = v\) is in the RHS set. If \(\word{s} \worders w\) then \(s \lorders [w]\) so \(s = \nil\) which implies that \(wv = \word{s}u = \epsilon u = u\) and that \(u \in \pnodes{\gfragment{\alpha}{c}{s}} = \pnodes{\gfragment{\alpha}{c}{\nil}} = \pnodes{\iota^\alpha_c}\). This is a contradiction since \(wv = u\) implies \(w \worder u\) but \(w\) is a non-wellfounded leaf of \(\iota^\alpha_c\) and \(u\) is not (so it cannot be equal and it cannot be bigger since \(w\) is a leaf).

    To show that \(f(\mu^\alpha_c(w))\) has the same labelling as \(\subtree{\pi}{w}\) we let \(v \in \nodes{\subtree{\pi}{w}}\), then (by (i)) there are \(w{:}r \in \rootsp{\alpha}{c}\) and \(u \in \pnodes{\gfragment{\alpha}{c}{w{:}r}}\) such that  \(wv = \word{w{:}r}u\). The following equalities straightforwardly follow from this:
    \[
    \ell^{f(\mu^\alpha_c(w))}(v) = \ell^{\gfragment{\alpha}{\mu^\alpha_c(w)}{r}}(u) = \ell^{\gfragment{\alpha}{c}{w{:}r}}(u) = \ell^\pi(wv) = \ell^{\subtree{\pi}{w}}(v).
    \]

    Finally, we show that \(f(\mu^\alpha_c(w))\) has the same fragmentation as \(\subtree{\pi}{w}\). Note that \(v \sim^{\subtree{\pi}{w}} u\) iff \(wv \sim^\pi wu\) iff\footnote{In principle it would be equivalent to the existence of some \(r \in \rootsp{\alpha}{c}\) and \(v_0,u_0\) such that \(\word{r}v_0 = wv\) and \(\word{r}u_0 = wu\). This equalities with the fact that \(w \in \nwleaf{\iota^\alpha_c}\) so \([w] \in \rootsp{\alpha}{c}\) allows us to derive the desired equivalence.} there are \(r \) such that \(w{:}r \in \rootsp{\alpha}{c}\) and \(v_0,u_0 \in \pnodes{\gfragment{\alpha}{c}{w{:}r}}\) such that \(\word{w{:}r}v_0 = wv\) and \(\word{w{:}r}u_0 = wu\). This is equivalent to the existence of \(r \in \rootsp{\alpha}{\mu^\alpha_c(w)}\) and \(v_0,u_0 \in \pnodes{\gfragment{\alpha}{\mu^\alpha_c(w)}{r}}\) such that \(\word{r}v_0 = v\) and \(\word{r}u_0 = u\), which is equivalent to \(v \sim^{f(\mu^\alpha_c(w))} u\).
    
    Uniqueness. Let \((C,\alpha)\) be a coalgebra and \(\gamma_0,\gamma_1 : C \longrightarrow \fftree\) be coalgebra morphisms and let \(\pi_0 = \gamma_0(c), \pi_1 = \gamma_1(c)\) with \(c \in C\). By Lemma \ref{lm:characterization-of-equality} it suffices to show that \(\roots{\pi_0} = \roots{\pi_1}\) (i) and that for any \(w \in \roots{\pi_0}\), \(\fragment{\pi_0}{w} = \fragment{\pi_1}{w}\) (ii).

    Proof of (i). We show \(\subseteq\), the other inclusion being analogous. If \(w \in \roots{\pi_0}\) then \(\rootp{\pi}{w} \in \rootsp{\pi_0}\) by Lemma \ref{lm:correspondance-of-coalgebra-and-representation-operations}. By Lemma \ref{lm:fundamental-properties} we have that 
    \[
    \rootsp{\pi_0} = \rootsp{\gamma_0(c)} = \rootsp{\alpha, c} = \rootsp{\gamma_1(c)} = \rootsp{\pi_1},
    \]
    by virtue of \(\gamma_0,\gamma_1\) being coalgebra morphisms. So \(\rootp{\pi_0}{w} \in \rootsp{\pi_1}\) and thanks to Lemma \ref{lm:correspondance-of-coalgebra-and-representation-operations} we get \(w = \word{\rootp{\pi_0}{w}} \in \roots{\pi_1} \).
    
    Proof of (ii). Since we showed that \(\roots{\pi_0} = \roots{\pi_1}\) we have that for any \(w \in \roots{\pi_0}\), \(\rootp{\pi_0}{w} = \rootp{\pi_1}{w}\). Fix \(w \in \roots{\pi_0}\) and define \(r = \rootp{\pi_0}{w} = \rootp{\pi_1}{w}\). Using Lemma \ref{lm:fundamental-properties} and Lemma \ref{lm:correspondance-of-coalgebra-and-representation-operations} we get the desired
    \[
    \fragment{\pi_0}{w} = \gfragment{}{\pi_0}{r} = \gfragment{}{\gamma_0(c)}{r} = \gfragment{\alpha}{c}{r} = \gfragment{}{\gamma_1(c)}{r} = \gfragment{}{\pi_1}{r} = \fragment{\pi_1}{w}.
    \]
\end{proof}

Observe that since \((\fftree,\destruct)\) is a final coalgebra, \(\destruct\) is a coalgebra isomorphism and thus has an inverse \(\construct\). It is easy to see that given a pair \((\iota,\mu : \nwleaf{\iota} \longrightarrow \fftree)\), \(\construct(\iota,\mu)\) is just the result of ``gluing'' \(\mu(w)\) at \(w\) in \(\iota\) for each \(w \in \nwleaf{\iota}\). Given a coalgebra \((C,\alpha)\) let us denote by \(\final{\alpha}\) the unique coalgebra morphism from it to \((\fftree, \construct)\). We have that:
\[
\final{\alpha} = \construct \circ \treefun(\final{\alpha}) \circ \alpha.
\]
This equation is what provides corecursion of its character. \(\alpha\) can be considered as performing 1 step of the corecursion and \(\final{\alpha}\) will be applying the whole corecursion. Then, the equation says that applying the whole corecursion is the same as applying 1 step obtaining a pair consisting of a finite tree and a function giving an element of the coalgebra for each leaf. Then apply the whole corecursion to the elements of the coalgebras obtaining finite-fragmented trees and glue these finite-fragmented trees to the finite tree given by \(\alpha\).

\section{Proof translations}
\label{sec:proof-trans}

In this section we provide sufficient conditions to ensure that a function defined by corecursion transforms proofs in a local progress calculus to proofs in another local progress calculus. We start by defining exactly what we mean by local progress calculi.
During this section we fix an arbitrary set \(\sequentset\) whose elements are called sequents. 

\begin{definition}
    A \emph{rule instance} consists in a pair \(P \Longrightarrow C\) where:
    \begin{enumerate}
        \item \(P\) is a finite sequence of sequents, the \emph{premises} of the instance.
        \item \(C\) is a sequent, the \emph{conclusion} of the instance.
    \end{enumerate}
    A \emph{rule} \(R\) is a non-empty set of rule instances. 
    A \emph{local-progress sequent calculus} is a pair \(\mathcal{C} = (\mathcal{R},L)\) where:
    \begin{enumerate}
        \item \(\mathcal{R}\) is a set of rules, called the \emph{rules of \(\mathcal{C}\)}.
        \item \(L\) is a function that given a rule \(R \in \mathcal{R}\) and an instance of the rule \((S_0,\ldots,S_{k-1} \Longrightarrow S) \in R\) it returns a subset of \(\{0,\ldots,k-1\}\). This subset is the set of premises of the rule instance that make progress. The function is called the \emph{local progress function of \(\mathcal{C}\)}. We will denote the application of \(L\) to a rule \(R\) and rule instance \(r \in R\) as \(L_R(r)\).
    \end{enumerate}
    We will use $\mathcal{C}$ to denote local progress sequent calculi.
\end{definition}

From now on we assume that all trees considered are labelled with ordered pairs \((S,R)\) where \(S\) is a sequent and \(R\) a rule. We denote the collection of non-wellfounded labelled trees by $\illtree$. Given a node \(w\) in a tree \(\tau\) we will write \(\sequent{\tau}{w}\) to mean the first component of its label and \(\seqrule{\tau}{w}\) to mean the second component of its label. We define the usual notion of non-wellfounded proof in the following definition.

\begin{definition}
    Let \(\pi \in \illtree\). We say that \(\pi\) is a \emph{pre-proof in \(\lpcalculus\)} iff for every node \(w\) of \(\pi\) we have that: \((S_0,\ldots,S_{k-1} \Longrightarrow S) \in R\),
    where \(w\) is \(k\)-ary in \(\pi\), \(S_i = \sequent{\pi}{wi}\) for \(i < k\), \(S = \sequent{\pi}{w}\) and \(R = \seqrule{\pi}{w}\).
    We denote the collection of pre-proofs of \(\mathcal{C}\) as \(\illpreproof{\mathcal{C}}\). 

    Let \(\tau \in \illpreproof{\mathcal{C}}\) and \(w \in \nodes{\tau}\). We say that \emph{\(w\) is progressing (in \(\tau\) with respect to \(\mathcal{C}\))} if there is \(v \in \nat^*, i \in \nat\) such that \(w = vi\) and \(i \in L_R(S_0,\ldots,S_{k-1} \Longrightarrow S)\) where \(v\) is \(k\)-ary in \(\tau\), \(S_j = \sequent{\tau}{vj}\) for \(j < k\), \(S = \sequent{\tau}{v}\) and \(R = \seqrule{\tau}{v}\).

    We say that \(\pi\) is a \emph{proof in \(\lpcalculus\)} iff it is a pre-proof and for any infinite branch \(b\) of \(\pi\) there are infinitely many \(i\)'s such that \(\restricts{b}{i}\) is progressing.
    The collection of proofs of \(\mathcal{C}\) is denoted as \(\illproof{\mathcal{C}}\).
\end{definition}

In the following two definitions, we define a notion of proof for finite-fragmented trees.

\begin{definition}
    Let \(\iota \in \nwtree\) and \(s : \nwleaf{\iota} \longrightarrow \sequentset\). We say that \((\iota,s)\) is a proof fragment in \(\mathcal{C}\) iff for any \(w \in \pnodes{\iota}\) of arity \(k\) in \(\iota\), if we define
    \begin{align*}
        &S_i = \begin{cases}
            \sequent{\iota}{wi} & \text{for }wi \not\in\nwleaf{\iota}, \\
            s(wi) &\text{for }wi \in \nwleaf{\iota},
        \end{cases} \text{ for }i < k \\
        &S = \sequent{\iota}{w}, \\
        &R = \seqrule{\iota}{w},
    \end{align*}
    then we have that:
    \begin{enumerate}
        \item \((S_0,\ldots,S_{k-1} \Longrightarrow S) \in R\).
        \item (Progress) For any \(i < k\), \(wi \in \nwleaf{\iota}\) iff \(i \in L_R(S_0,\ldots,S_{k-1}\Longrightarrow S)\).
    \end{enumerate}
    
\end{definition}

\begin{definition}
    Let \(\pi = (\tau,\sim) \in \fftree\). We say that \(\pi\) is a \emph{finite-fragmented proof in \(\mathcal{C}\)} iff for any \(r \in \rootsp{\pi}\)\footnote{Recall that we write $\rootsp{\pi}$ for $\rootsp{\destruct,\pi}$.} we have that the pair
    \[(\gfragment{}{\pi}{r}, \, \, v \in \nwleaf{\gfragment{}{\pi}{r}} \mapsto \sequent{(\gfragment{}{\pi}{r{:}v})}{\epsilon})\]
    is a proof fragment in \(\mathcal{C}\). We will denote the set of finite-fragmented proofs in \(\mathcal{C}\) as \(\ffproof{\mathcal{C}}\).
\end{definition}
    We note that in the previous definition \(\sequent{(\gfragment{}{\pi}{r{:}v})}{\epsilon} = \sequent{(\fragment{\pi}{\word{r}v})}{\epsilon} = \sequent{\pi}{\word{r}v}\), so it is just the sequent corresponding to the node in the whole proof.
    Given a preproof of \(\mathcal{C}\) we can define a partition as in the next definition.

\begin{definition}
    \label{df:fragment-relation-in-proofs}
    Let \(\tau \in \illpreproof{\mathcal{C}}\), we define the relation \(\noprogone{\tau} \subseteq \nodes{\tau} \times \nodes{\tau}\) as:
    \[
    w \noprogone{\tau} v \text{ iff there is \(i \in \nat\) such that }v = wi \text{ and }v\text{ is not progressing}.
    \]
    Let \(\noprog{\tau}\) be the equivalence relation closure of \(\noprogone{\tau}\), in other words \(w \noprog{\tau} v\) iff there is a sequence \(w_0,\ldots,w_n\) such that \(w = w_0, v = w_n\) and for any \(i < n\), \(w_i \noprogone{\tau} w_{i+1}\) or \(w_{i+1} \noprogone{\tau} w_i\). 
\end{definition}

 A relation between finite-fragmented proofs and non-wellfounded proofs is established and proven in the following lemma.

\begin{lemma}
    \label{lm:equivalent-proof-notions}
    We have that
    \begin{enumerate}
        \item \(\tau \in \illproof{\mathcal{C}}\) implies \((\tau,\noprog{\tau}) \in \ffproof{\mathcal{C}}\).\footnote{In fact it can be shown that the fragmentation in a finite-fragmented proof is unique. This point implies that \(\noprog{\tau}\) is that unique fragmentation.}
        \item \(\pi = (\tau,\sim) \in \ffproof{\mathcal{C}}\) implies \(\tau \in \illproof{\mathcal{C}}\).
    \end{enumerate}
\end{lemma}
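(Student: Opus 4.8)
Both implications hinge on a single observation relating progress to a fragmentation: for a node $x\neq\epsilon$ with parent $y$, the node $x$ is progressing iff $x$ lies in a different block from $y$, equivalently iff $x$ is the $\worder$-minimum of its block (a root of the ambient finite-fragmented tree). This is what ties the global progress condition on branches to the finiteness of blocks. For part~(i) I first check $(\tau,\noprog{\tau})\in\fftree$. Since $\noprog{\tau}$ is the equivalence closure of the parent/child relation $\noprogone{\tau}$, which links $y$ to a child $yi$ exactly when $yi$ is not progressing, each block is a connected subtree of $\tau$: any walk between two of its nodes along such edges must, in a tree, follow the unique path joining them, so the observation holds here by definition. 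Convexity is then immediate (if $w\worder u\worder v$ with $w,v$ in one block, the descending path from $w$ to $v$ passes through $u$, which therefore lies in the block), and the Root property follows because a connected subtree has a unique node of minimal depth, which is its $\worder$-minimum.

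The main obstacle is Finiteness, and this is exactly where $\tau\in\illproof{\mathcal{C}}$ is used. I argue by contradiction: an infinite block is an infinite, finitely branching subtree, so by K\"onig's lemma it contains an infinite $\worder$-chain $w_0\worder w_1\worder\cdots$; since consecutive nodes lie in the same block, each $w_{j+1}$ is non-progressing. Prepending the finite path from $\epsilon$ to $w_0$ produces an infinite branch $b$ of $\tau$ along which every node beyond $w_0$ is non-progressing, so only finitely many prefixes $\restricts{b}{i}$ are progressing --- contradicting the definition of a proof. Thus every block is finite and $(\tau,\noprog{\tau})\in\fftree$.

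It then remains to verify that for every $r\in\rootsp{\pi}$ the associated pair is a proof fragment. Using Lemma~\ref{lm:correspondance-of-coalgebra-and-representation-operations} I pass to the tree fragment $\fragment{\pi}{w}$ with $w=\word{r}$, whose proper nodes are the block of $w$ and whose non-wellfounded leaves are the immediate successor roots; note that every child of a proper node appears in the fragment (non-progressing children as proper nodes, progressing children as non-wellfounded leaves), so arities and the supplied leaf sequents agree with those of $\tau$. For a proper node $x$, clause~(1) is then precisely the pre-proof condition of $\tau$ at $x$, which holds since $\tau\in\illproof{\mathcal{C}}$; and clause~(2) is the observation, reading ``$xi$ is a non-wellfounded leaf of the fragment iff $xi$ is progressing iff $i\in L_R(\cdots)$''. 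Hence $(\tau,\noprog{\tau})\in\ffproof{\mathcal{C}}$.

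For part~(ii), let $\pi=(\tau,\sim)\in\ffproof{\mathcal{C}}$. Every node $x$ of $\tau$ is a proper node of the tree fragment at its own block-root $\sqrt[\pi]{x}$, so clause~(1) of that proof fragment yields the local rule at $x$; thus $\tau\in\illpreproof{\mathcal{C}}$. For an arbitrary fragmentation $\sim$ the observation is no longer definitional, but it is recovered from clause~(2): a child $yi$ is a non-wellfounded leaf of the fragment at $\sqrt[\pi]{y}$ iff $i\in L_R(\cdots)$, i.e.\ iff $yi$ is progressing, and such leaves are exactly the roots of $\pi$ immediately below $y$; hence for $x\neq\epsilon$, $x$ is progressing iff $x\in\roots{\pi}$. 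Finally, suppose an infinite branch $b$ had only finitely many progressing nodes. Past the last one, every node is non-progressing and therefore $\sim$-equivalent to its parent, so the infinite tail of $b$ lies in a single block; that block is then infinite, contradicting the Finiteness property of $\pi\in\fftree$. Therefore every infinite branch carries infinitely many progressing nodes and $\tau\in\illproof{\mathcal{C}}$.
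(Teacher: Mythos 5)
Your proof is correct and follows essentially the same route as the paper's: your key observation (a child node is progressing iff it lies in a different block from its parent) is precisely the fact the paper invokes in its footnote, and your treatment of convexity, roots via common prefixes, and finiteness via K\"onig's lemma matches the paper's argument for (i), while your contrapositive argument for (ii) is just a reorganization of the paper's ``an infinite branch must cross infinitely many finite fragments, and each crossing is progressing'' reasoning. The only difference is the level of detail: the paper's proof is a terse sketch, and yours fills in the same steps.
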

\begin{proof}
To show that every node is an instance of a rule is easy in both implication, so we will focus in proving that the progressing condition is fulfilled in both implications.

Proof of (i). First we need to show that if \(\tau\) is a proof then \(\noprog{\tau}\) is a fragmentation. Convexity is shown by using that any path in a tree between \(w\) and \(v \geq w\) must go through all the middle nodes, i.e. through \(\{u \mid w \worder u \worder v\}\). The existence of root is shown by using that any path between \(w\) and \(v\) in a tree must go through \(w \cap v\), i.e. the common part of \(w\) and \(v\). Finiteness is shown using K\"onig's Lemma together with convexity and the existence of root for any equivalence class.

That the progress condition of proof-fragment is fulfilled follows from the definition of \(\noprog{\tau}\).\footnote{We also need to use that in a tree any path from \(w\) to \(wi\) must contain a transition \((w,wi)\).}

Proof of (ii). If we have an infinite branch in \(\tau\) it must go through infinitely many fragments, since each fragment is finite. By the second condition of proof fragment and thanks to each fragment being a proof fragment this implies that the branch makes progress infinitely many times.
\end{proof}

With the notion of finite-fragmented proof at hand we are ready to define the sufficient conditions to obtain a proof translation by corecursion.

\begin{definition}
    Let \(\mathcal{C}_0\) and \(\mathcal{C}_1\) be local progress sequent calculi and let
    \mbox{\(\alpha : \fftree \longrightarrow \treefun(\fftree)\)}. We say that \emph{\(\alpha\) is a proof translation step from \(\mathcal{C}_0\) to \(\mathcal{C}_1\)} iff we have that for any \(\pi \in \ffproof{\mathcal{C}_0}\):
    \begin{enumerate}
        \item \((\gfragment{\alpha}{\pi}{\nil}, w \mapsto \sequent{\gfragment{\alpha}{\pi}{[w]}}{\epsilon}) \) is a proof fragment in \(\mathcal{C}_1\).
        \item  For any \(w \in \nwleaf{\gfragment{\alpha}{\pi}{\nil}}\) we have that \(\subelement{\alpha}{\pi}{[w]} \in \ffproof{\mathcal{C}_0}\).
    \end{enumerate}

    A function \(\gamma\) is a \emph{finite-fragmented proof translation from \(\mathcal{C}_0\) to \(\mathcal{C}_1\)} iff 
    \mbox{\(\gamma : \ffproof{\mathcal{C}_0} \longrightarrow \ffproof{\mathcal{C}_1}\)}. A function \(\gamma\) is a \emph{non-wellfounded proof translation from \(\mathcal{C}_0\) to \(\mathcal{C}_1\)} iff \(\gamma : \illproof{\mathcal{C}_0} \longrightarrow \illproof{\mathcal{C}_1}\).
\end{definition}

\begin{theorem}
    \label{th:proof-translation-step-to-proof-translation}
    If \(\alpha\) is a proof translation step from \(\mathcal{C}_0\) to \(\mathcal{C}_1\) then 
    \begin{enumerate}
        \item \(\restricts{\final{\alpha}}{\ffproof{\mathcal{C}_0}}\), i.e. the function defined by corecursion from \(\alpha\) applied only to finite-fragmented proofs of \(\mathcal{C}_0\), is a finite-fragmented proof translation from \(\mathcal{C}_0\) to \(\mathcal{C}_1\), and
        \item The function \(\tau \in \illproof{\mathcal{C}_0} \mapsto (\mathsf{p}_0 \circ \final{\alpha})(\tau, \noprog{\tau})\), where $\mathsf{p}_0$ is the first projection from an ordered pair, is a non-wellfounded proof translation from \(\mathcal{C}_0\) to \(\mathcal{C}_1\).
    \end{enumerate}
\end{theorem}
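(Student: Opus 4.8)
The plan is to establish item~(1) and then derive item~(2) from it together with Lemma~\ref{lm:equivalent-proof-notions}. For item~(1), fix $\pi\in\ffproof{\mathcal{C}_0}$ and write $\sigma=\final{\alpha}(\pi)$; the goal is $\sigma\in\ffproof{\mathcal{C}_1}$. The key structural fact I would use is that $\final{\alpha}$ is a coalgebra morphism from $(\fftree,\alpha)$ to the final coalgebra $(\fftree,\destruct)$, so Lemma~\ref{lm:fundamental-properties}(iv) applies with $\gamma=\final{\alpha}$ and $c=\pi$, giving $\rootsp{\destruct,\sigma}=\rootsp{\alpha,\pi}$ and $\gfragment{\destruct}{\sigma}{r}=\gfragment{\alpha}{\pi}{r}$ for every $r$ in this common set. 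Unfolding the definition of finite-fragmented proof, this reduces the goal to showing that for each $r\in\rootsp{\alpha,\pi}$ the pair $\bigl(\gfragment{\alpha}{\pi}{r},\ v\mapsto\sequent{\gfragment{\alpha}{\pi}{r{:}v}}{\epsilon}\bigr)$ is a proof fragment in $\mathcal{C}_1$.

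The technical core is an auxiliary claim: $\subelement{\alpha}{\pi}{r}\in\ffproof{\mathcal{C}_0}$ for every $r\in\rootsp{\alpha,\pi}$. I would prove this by induction on $|r|$. The base case $r=\nil$ is immediate since $\subelement{\alpha}{\pi}{\nil}=\pi$. For the inductive step, write $r=s{:}w$; then $s\in\rootsp{\alpha,\pi}$ by Lemma~\ref{lm:fundamental-properties}(iii), so the induction hypothesis gives $\pi_s:=\subelement{\alpha}{\pi}{s}\in\ffproof{\mathcal{C}_0}$. Because $s{:}w$ is a root-path, Lemma~\ref{lm:fundamental-properties}(ii) yields $w\in\nwleaf{\gfragment{\alpha}{\pi}{s}}=\nwleaf{\gfragment{\alpha}{\pi_s}{\nil}}$, so condition~(2) of the definition of proof translation step applied to $\pi_s$ gives $\subelement{\alpha}{\pi_s}{[w]}\in\ffproof{\mathcal{C}_0}$; by Lemma~\ref{lm:fundamental-properties}(i) this equals $\subelement{\alpha}{\pi}{s{:}w}=\subelement{\alpha}{\pi}{r}$, closing the induction.

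With the claim available, I would fix $r\in\rootsp{\alpha,\pi}$ and set $\pi_r=\subelement{\alpha}{\pi}{r}\in\ffproof{\mathcal{C}_0}$. Condition~(1) of the definition of proof translation step, applied to $\pi_r$, states that $\bigl(\gfragment{\alpha}{\pi_r}{\nil},\ w\mapsto\sequent{\gfragment{\alpha}{\pi_r}{[w]}}{\epsilon}\bigr)$ is a proof fragment in $\mathcal{C}_1$. Rewriting $\gfragment{\alpha}{\pi_r}{\nil}=\gfragment{\alpha}{\pi}{r}$ and $\gfragment{\alpha}{\pi_r}{[w]}=\gfragment{\alpha}{\pi}{r{:}w}$ by Lemma~\ref{lm:fundamental-properties}(i), this proof fragment is exactly the pair the first paragraph asks for, so item~(1) follows. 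For item~(2), given $\tau\in\illproof{\mathcal{C}_0}$, Lemma~\ref{lm:equivalent-proof-notions}(i) gives $(\tau,\noprog{\tau})\in\ffproof{\mathcal{C}_0}$; item~(1) then yields $\final{\alpha}(\tau,\noprog{\tau})\in\ffproof{\mathcal{C}_1}$, and applying Lemma~\ref{lm:equivalent-proof-notions}(ii) to its underlying tree $\mathsf{p}_0\bigl(\final{\alpha}(\tau,\noprog{\tau})\bigr)$ shows it lies in $\illproof{\mathcal{C}_1}$, as required.

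The main obstacle is organizing the reduction correctly: the assertion $\sigma\in\ffproof{\mathcal{C}_1}$ is a condition quantified over \emph{all} root-paths of $\sigma$, whereas a proof translation step only supplies data about the main fragment and the immediate subelements. Bridging this gap requires both transporting root-paths and fragments across the morphism $\final{\alpha}$ via Lemma~\ref{lm:fundamental-properties}(iv) and the separate induction establishing that every subelement along a root-path remains a $\mathcal{C}_0$-proof, so that condition~(1) can legitimately be invoked at each $r$. Once these two ingredients are in place, the remainder is routine rewriting with the fundamental properties of fragments and subelements.
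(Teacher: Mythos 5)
Your proposal is correct and follows essentially the same route as the paper's proof: the same auxiliary induction showing $\subelement{\alpha}{\pi}{r}\in\ffproof{\mathcal{C}_0}$ via condition (2) of a translation step, the same transport of root-paths and fragments across the coalgebra morphism $\final{\alpha}$ via Lemma \ref{lm:fundamental-properties}, condition (1) applied at each subelement, and item (2) derived from item (1) together with Lemma \ref{lm:equivalent-proof-notions}. The only difference is that you spell out the induction step in more detail than the paper does.
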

\begin{proof}
    Proof of (i). Let \(\pi_0 \in \ffproof{\mathcal{C}_0}\) and \(\pi_1 = \final{\alpha}(\pi_0)\). Thanks to the second condition in the definition of proof translation step, by induction in \(r\) we can show that if \(\pi_0 \in \ffproof{\mathcal{C}_0}\) then \(\subelement{\alpha}{\pi_0}{r} \in \ffproof{\mathcal{C}_0}\) (1).

    Let \(r \in \rootsp{\pi_1}\), we have to show that \((\gfragment{}{\pi_1}{r},w \mapsto \sequent{\gfragment{}{\pi_1}{r:w}}{\epsilon})\) is a proof fragment in \(\mathcal{C}_1\).
    By Lemma \ref{lm:fundamental-properties} and thanks to \(\final{\alpha}\) being a coalgebra morphism, we know that \(\rootsp{\pi_1} = \rootsp{\alpha, \pi_0}\) and \(\gfragment{}{\pi_1}{r} = \gfragment{\alpha}{\pi_0}{r} = \gfragment{\alpha}{\subelement{\alpha}{\pi_0}{r}}{\nil}\). Given \(w \in \nwleaf{\gfragment{}{\pi_1}{r}}\), with Lemma \ref{lm:fundamental-properties} again, we get \(r{:}w \in \rootsp{\pi_1} = \rootsp{\alpha, \pi_0}\) and \(\gfragment{}{\pi_1}{r:w} = \gfragment{\alpha}{\pi_0}{r:w} = \gfragment{\alpha}{\subelement{\alpha}{\pi_0}{r}}{[w]}\). So it suffices to show that
    \(
    (\gfragment{\alpha}{\subelement{\alpha}{\pi_0}{r}}{\nil}, w \mapsto \sequent{\gfragment{\alpha}{\subelement{\alpha}{\pi_0}{r}}{[w]}}{\epsilon})
    \)
    is a proof fragment of \(\mathcal{C}_1\). But this is the first condition in the definition of proof translation step, which is obtained thanks to \(\subelement{\alpha}{\pi_0}{r} \in \ffproof{\mathcal{C}_0}\) by (1).

    Proof of (ii). Note that if \(\tau \in \illproof{\mathcal{C}_0}\) then \((\tau,\noprog{\tau})\in \ffproof{\mathcal{C}_0}\) by Lemma \ref{lm:equivalent-proof-notions} and thanks to point (i) of this theorem \(\pi_1 = \final{\alpha}(\tau,\sim^\tau) \in \ffproof{\mathcal{C}_1}\). Applying Lemma \ref{lm:equivalent-proof-notions} we conclude that \(\mathsf{p}_0(\pi_1) \in \illproof{\mathcal{C}_1}\).
\end{proof}

Let us finish this section remarking that this result can be generalized. We can redefine the notion of proof translation step as starting in an arbitrary set, say that \(\alpha : X \longrightarrow \mathcal{T}(X)\) \emph{is a proof translation step from \(X\) to the calculus \(\mathcal{C}\)} iff for any \(x \in X\), \((\fragment{\alpha}{\nil}(x), w \mapsto \sequent{\fragment{\alpha}{w}(x)}{\epsilon}\) is a proof fragment in \(\mathcal{C}\). Then it is easy to adapt the proof of Theorem \ref{th:proof-translation-step-to-proof-translation} to show that extending this kind of proof translation step via corecursion gives a function from \(X\) to proofs in \(\mathcal{C}\). One possible use of this is to define corecursions that change their character. For example, imagine you want to define a function from \(X\) to proofs in \(\mathcal{C}\), that first follows some proof translation step \(\alpha\) and then it follows another proof translation step \(\beta\). Then we can define a proof translation step \(\gamma : X \times \{0,1\} \longrightarrow \mathcal{C}\)\footnote{Note that in order to \(\gamma\) being a proof translation step from \(X\) to the calculus \(\mathcal{C}\) we also need a compatibility condition between \(\alpha\) and \(\beta\). The compatibility condition can be intuitively expressed as: if we find \((x,0)\) and it meets the condition for changing the behaviour from \(\alpha\) to \(\beta\) then \((\iota^\alpha_x, w \mapsto \sequent{\iota^\beta_{\mu^\alpha_x(w)}}{\epsilon})\) must be a proof fragment in \(\mathcal{C}\).} as:
\[
\gamma(x,i) = \begin{cases}
    (\iota^\alpha_x, (w \mapsto \mu^\alpha_x(w),0)) &\text{if \(i = 0\) and the condition is not true}, \\
    (\iota^\alpha_x, (w \mapsto \mu^\alpha_x(w),1)) &\text{if \(i = 0\) and the condition is true}, \\
    (\iota^\beta_x, (w \mapsto \mu^\beta_x(w),1)) &\text{if \(i = 1\)}.
    \end{cases}
\]
The desired translation of \(x \in X\) would be to apply the translation obtained by this proof translation step \(\gamma\) to \((x,0)\).
\section{Cut-Elimination for Non-wellfounded Grz}

In this section we illustrate the method of translations between local progress sequent systems developed in Section \ref{sec:proof-trans} by proving cut-elimination for a non-wellfounded calculus for $\mathsf{Grz}$. The calculus was introduced by Savateev and Shamkanov \cite{ShamkanovGrz}, who also establish cut-elimination. We briefly introduce the language of $\mathsf{Grz}$ and the aforementioned non-wellfounded calculus. We then show how instances of cut can be pushed out of the main fragment of a proof, and apply our method to obtain the cut-elimination result.

The language of $\mathsf{Grz}$ is the language of modal logic with primitive symbols \(\bot\), \(\to\), and \(\nec\). We will use \(p,q,r\) to denote propositional variables and \(\phi,\psi,\chi\) to denote formulas. \(\Gamma\), \(\Delta\), \(\Lambda\), \(\Pi\) will denote finite multisets of formulas. Sequents are ordered pairs of shape \((\Gamma,\Delta)\), usually denoted by \(S\). Union of sequents is the componentwise union of multisets and is denoted by \(S_0 \cup S_1\). We also define the notations:
\begin{align*}
    &S,\phi^\bullet\text{ to mean }S \cup (\{\phi\},\varnothing),
    && S,\phi^\circ\text{ to mean }S \cup (\varnothing,\{\phi\}), \\
    &S,\Gamma^\bullet\text{ to mean }S \cup (\Gamma,\varnothing),
    &&S,\Gamma^\circ\text{ to mean }S \cup (\varnothing,\Gamma).
\end{align*}
where \(\{\phi\}\) is the  singleton multiset consisting of $\phi$.

\begin{definition}
    We define the following rules:
    \begin{align*}
        \prfbyaxiom{Ax}{S,p^\bullet,p^\circ}
        & &
        \prfbyaxiom{$\bot^\bullet$}{S,\bot^\bullet} \\
        \prftree[r]{$\to^\bullet$}
        {S,\phi^\circ}{}
        {S,\psi^\bullet}
        {S,\phi \to \psi^\bullet}
        & &
        \prftree[r]{$\to^\circ$}
        {S,\phi^\bullet,\psi^\circ}
        {S,\phi \to \psi^\circ} \\
        \prftree[r]{Refl}
        {S,\phi^\bullet,\nec\phi^\bullet}
        {S,\nec\phi^\bullet}
        &&
        \prftree[r]{$\nec$}
        {S,\nec\Pi^\bullet,\phi^\circ}{}
        {\nec\Pi^\bullet,\phi^\circ}
        {S,\nec\Pi^\bullet,\nec\phi^\circ}
    \end{align*}
    In each rule the explicitly displayed formula in the conclusion is called \emph{principal}. In the rule $\nec$ the sequent $S$ is called the \emph{weakening part}.

    Then \(\Grz^\infty\) is the local progress sequent calculus with the rules above such that the only progress occurs in the right premise of the \(\nec\) rule. We also define the rule:
    \[
    \prftree[r]{\cut}
    {S,\phi^\circ}{}
    {S,\phi^\bullet}
    {S}
    \]
    The system consisting of the rules of \(\Grz^\infty\) plus cut and where the only progress occurs in the right premise of \(\nec\) is denoted as \((\Grz + \cut)^\infty\)
\end{definition}

Thanks to Section \ref{sec:proof-trans} we know that \(\tau\) is an non-wellfounded proof of \(\cal{C}\) iff \((\tau,\noprog{\tau})\) ($\noprog{\tau}$ is defined in Definition \ref{df:fragment-relation-in-proofs}) is a finite-fragmented proof of \(\cal{C}\). For that reason it will be common to identify both objects and for example talk about the main fragment of \(\tau\) when we are strictly speaking of the main fragment of \((\tau,\noprog{\tau})\). With this in mind, we define the notion of \emph{local height} of a proof \(\pi\) as the height of its main fragment.

\begin{definition}
    A function \(f : \mathbb{P}^\infty\Grzc \longrightarrow \mathbb{P}^\infty\Grzc\) is said to:
    \begin{enumerate}
        \item \emph{Preserve local height} iff the local height of \(f(\pi)\) is smaller or equal than the local height of \(\pi\).
        \item \emph{Preserve freeness of cuts in the main fragment} iff whenever \(\pi\) is a proof with no cuts in its main fragment, so is \(f(\pi)\).
    \end{enumerate}
\end{definition}

\begin{lemma}
    \label{lm:auxiliary-functions}
    We have functions \(\weakening_{S'}\), \(\contr_{p^\bullet}\), \(\contr_{p^\circ}\), \(\inv_{\bot^\circ}\), \(\linv_{\phi \to \psi^\bullet}\), \(\rinv_{\phi \to \psi^\bullet}\), \(\inv_{\phi \to \psi^\circ}\), \(\inv_{\nec\phi^\circ} : \mathbb{P}^\infty\Grzc \longrightarrow \mathbb{P}^\infty\Grzc\) such that:
    \begin{enumerate}
        \item \label{first-Grz} If \(\pi \vdash S\), then $\weakening_{S'}(\pi) \vdash S,S'$.
        \item If \(\pi \vdash S,p^\bullet,p^\bullet\), then \(\contr_{p^\bullet}(\pi) \vdash S,p^\bullet\).
        \item If \(\pi \vdash S,p^\circ,p^\circ\), then \(\contr_{p^\circ}(\pi) \vdash S,p^\circ\).
        \item If \(\pi \vdash S,\bot^\circ\), then \(\inv_{\bot^\circ}(\pi) \vdash S\).
        \item If \(\pi \vdash S,\phi \to \psi^\bullet\), then \(\linv_{\phi \to \psi^\bullet}(\pi) \vdash S,\phi^\circ\) and \(\rinv_{\phi \to \psi^\bullet}(\pi) \vdash S,\psi^\bullet\).
        \item If \(\pi \vdash S,\phi \to \psi^\circ\), then \(\inv_{\phi \to \psi^\circ}(\pi) \vdash S,\phi^\bullet,\psi^\circ\).
        \item \label{last-Grz} If \(\pi \vdash S,\nec \phi^\circ\), then \(\inv_{\nec\phi^\circ}(\pi) \vdash S,\phi^\circ\).
        \item \label{extra-Grz}  All these functions preserve local height and freeness of cuts in the main fragment. 
    \end{enumerate}
\end{lemma}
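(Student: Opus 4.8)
The plan is to define all eight functions at once by corecursion, via the generalization of Theorem~\ref{th:proof-translation-step-to-proof-translation} described at the end of Section~\ref{sec:proof-trans}. I would take as state space $X = \mathbb{P}^\infty\Grzc \times \mathsf{Param}$, where (identifying a proof with its canonical finite-fragmented proof through Lemma~\ref{lm:equivalent-proof-notions}) the component $\mathsf{Param}$ records the datum of the operation being carried out---the weakening sequent $S'$, the atom $p$, or the formula to be inverted---together with a distinguished value $\mathsf{id}$ meaning ``leave unchanged''. The translation step $\alpha$ will rewrite only the main fragment of its input and, at every non-wellfounded leaf, return the original subproof tagged with $\mathsf{id}$; since $\alpha$ in $\mathsf{id}$ mode is the identity, the corecursion $\final{\alpha}$ collapses to a single local rewrite of the main fragment followed by reattaching the untouched subproofs. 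Each desired function is then $\pi \mapsto \final{\alpha}(\pi,\mathsf{param})$, and the generalized theorem guarantees it maps proofs to proofs as soon as $\alpha$ is a proof translation step.

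The reason a single-fragment rewrite is enough is that every formula these functions track is dropped at the right premise of the $\nec$ rule, which is precisely where the main fragment ends (the right premise of $\nec$ being the only progressing premise of the calculus, and the weakening part $S$ being discarded there). For $\weakening_{S'}$, $\contr_{p^\bullet}$, $\contr_{p^\circ}$, $\inv_{\bot^\circ}$ and the two $\to$-inversions the tracked formulas are non-boxed, so they always lie in this weakening part and disappear at each progressing leaf; for $\inv_{\nec\phi^\circ}$ the tracked formula is a right-hand formula, hence again in the weakening part unless it is the principal $\nec\phi^\circ$. Consequently, along every branch the operation either terminates inside the fragment or is silently dropped at a progressing leaf, where the corecursion reattaches the original subproof in $\mathsf{id}$ mode.

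I would then spell out the local rewrites and check them rule by rule. Weakening adds $S'$ to every sequent, placing the new formulas in the weakening part at each $\nec$ node; contraction identifies the two marked occurrences of $p$; $\inv_{\bot^\circ}$ erases $\bot^\circ$. These three preserve the shape of the fragment, so local height is preserved exactly, and one only verifies that each relabelled node remains a rule instance and each relabelled axiom remains an axiom. The implication and box inversions propagate the tracked formula---rewriting $\phi\to\psi^\bullet$ to $\phi^\circ$ or to $\psi^\bullet$, $\phi\to\psi^\circ$ to $\phi^\bullet,\psi^\circ$, and $\nec\phi^\circ$ to $\phi^\circ$---until it becomes principal at a $\to^\bullet$, $\to^\circ$ or $\nec$ node, at which point that rule is deleted and the relevant premise is kept (the left or right premise for $\to^\bullet$, the unique premise for $\to^\circ$, and the left premise for $\nec$, which equals the target sequent $S,\phi^\circ$), the parameter switching to $\mathsf{id}$ thereafter. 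Since these rewrites only delete rules, local height can only decrease, and since none of them ever introduces a $\cut$, freeness of cuts in the main fragment is preserved; this establishes item~\ref{extra-Grz}.

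The main obstacle is the $\nec$ rule, and through it the preservation of the progress condition: I expect the delicate verification to be that $\alpha$ is a genuine proof translation step, i.e. that for every state the rewritten main fragment together with the reattached root sequents forms a proof fragment of $(\Grz+\cut)^\infty$. The critical case is $\inv_{\nec\phi^\circ}$ at a $\nec$ node: when $\nec\phi^\circ$ is principal one must check that discarding the progressing right premise and splicing in the left premise yields exactly $S,\phi^\circ$ without harming progress elsewhere, while when $\nec\phi^\circ$ sits in the weakening part the right premise---and the subproof attached to it---must be left exactly as before. Because the right premise of $\nec$ is the only progressing premise and every surviving $\nec$ node retains its right premise as a non-wellfounded leaf, the Progress clause of the proof-fragment definition is maintained; for $\mathsf{id}$-mode states the condition is immediate since the fragment is unchanged. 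Having checked this, the generalization of Theorem~\ref{th:proof-translation-step-to-proof-translation} (together with Lemma~\ref{lm:equivalent-proof-notions} to move between non-wellfounded and finite-fragmented proofs) delivers all eight functions with the claimed typing and with the properties of item~\ref{extra-Grz}.
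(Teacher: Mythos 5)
Your proposal is correct, but it assembles the construction differently than the paper does. The paper's own proof is a one-line remark: items (i)--(vii) are proved by a plain induction on the local height of \(\pi\), i.e.\ a finitary induction over the main fragment, in which the \(\nec\)-case applies the induction hypothesis only to the left premise and reattaches the right premise's subproof untouched (possible for exactly the reason you identify: the tracked formulas either are principal or sit in the weakening part at a \(\nec\)-node, so they are discarded at the progressing premise), and item (viii) is read off from that construction. You perform the same rule-by-rule rewrites, but you package them as a single rewrite of the whole main fragment and then invoke the generalized translation-step framework of Section~\ref{sec:proof-trans} over a state space \(\mathbb{P}^\infty\Grzc \times \mathsf{Param}\) with an \(\mathsf{id}\) mode. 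This is sound, and it has one genuine merit: it makes explicit, via Theorem~\ref{th:proof-translation-step-to-proof-translation} and Lemma~\ref{lm:equivalent-proof-notions}, the fact (left implicit in the paper) that gluing unchanged proofs onto a modified proof fragment again yields a proof satisfying the global progress condition. But it is heavier than necessary, and it does not actually avoid the paper's induction: since the proof-fragment definition forces the non-wellfounded leaves of a step's output to be exactly the progressing premises, your step \(\alpha\) must rewrite an entire fragment at once, and defining that rewrite is itself a recursion on the finite fragment --- the paper's induction on local height in disguise --- while the corecursion does no real work after the first step (its collapse on \(\mathsf{id}\)-states to the identity uses the uniqueness half of finality, which you should say). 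Two small points to tighten: for \(\weakening_{S'}\) the added formulas need not be non-boxed --- one simply chooses to place \(S'\) inside the weakening part of each \(\nec\)-instance; and \(\alpha\) must also be defined (e.g.\ as the \(\mathsf{id}\) step) on states whose parameter does not match the root sequent, since the framework requires the step condition for every state.
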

\begin{proof}
    All the items (\ref{first-Grz}) to (\ref{last-Grz}) are provable by induction in the local height of $\pi$, (\ref{extra-Grz}) is just an observation of the constructions during the induction.
\end{proof}

\begin{lemma}
    \label{lm:pusing-topmost-cut-upwards}
    Let \(\pi \vdash_{\Grzc} S,\phi^\circ\) and \(\tau \vdash_{\Grzc} S,\phi^\bullet\) without any instance of \(\cut\) in their main fragments. Then there is a proof of \(\vdash_{\Grzc} S\) without any instance of $\cut$  in its main fragment.
\end{lemma}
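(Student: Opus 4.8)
The plan is to prove the lemma by a lexicographic induction whose primary component is the complexity (number of connectives) of the cut formula $\phi$ and whose secondary component is the sum of the local heights of $\pi$ and $\tau$. At each step I will either eliminate the cut outright, replace it by cuts on strictly smaller formulas, or permute it upward past the bottom rule of $\pi$ or of $\tau$, which strictly decreases the relevant local height while keeping $\phi$ fixed. The auxiliary maps of Lemma~\ref{lm:auxiliary-functions} are the main tool for repairing contexts: since they preserve local height and freeness of cuts in the main fragment, applying an inversion or weakening to $\pi$ or $\tau$ keeps us within the scope of the induction hypothesis. The guiding observation is that progress, and hence the lower boundary of the main fragment, occurs only at the right premise of $\nec$; therefore a cut that gets pushed into such a premise has already left the main fragment and does not need to be removed for the conclusion of the lemma.

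First I would clear the shallow cases. If $\phi=\bot$ then $\inv_{\bot^\circ}(\pi)\vdash S$ already works, since this map preserves freeness of cuts in the main fragment. If $\phi$ is a variable $p$ and $p^\bullet$ (resp. $p^\circ$) is principal in the bottom rule, then that rule is $\mathsf{Ax}$, and a single application of $\contr_{p^\circ}$ to $\pi$ (resp. $\contr_{p^\bullet}$ to $\tau$) yields a proof of $S$. If $\phi=\psi\to\chi$ is principal in both $\pi$ and $\tau$, I would carry out the standard reduction: cut the premise $S,\psi^\bullet,\chi^\circ$ of $\pi$ against a weakening of $\tau$'s right premise on $\chi$ to obtain $S,\psi^\bullet$, then cut this against $\tau$'s left premise $S,\psi^\circ$ on $\psi$; both new cuts are on formulas of smaller complexity, so the primary induction hypothesis applies.

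When $\phi$ is not principal in the bottom rule of $\tau$ (or, symmetrically, of $\pi$) I would permute the cut upward. For every bottom rule other than $\nec$ the cut formula lies in the plain context, and the permutation is routine: one applies the inversion associated with the permuted rule to the \emph{opposite} proof (for instance $\linv$ and $\rinv$ when permuting past $\to^\bullet$, and $\inv_{\cdot}$ when permuting past $\to^\circ$), cuts into the premises, and reapplies the rule; each resulting cut keeps $\phi$ fixed and strictly lowers the local-height sum. The same works for the $\nec$ rule as long as $\phi^\bullet$ lies in the weakening part $S$, because that part is discarded in the progress premise, so only the left premise is affected. In particular, if $\phi$ is not a boxed formula then $\phi^\bullet$ can never enter the boxed context $\nec\Pi^\bullet$, and all $\nec$-permutations are of this easy kind.

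The main obstacle is the genuinely $\Grz$-specific case $\phi=\nec\chi$ where $\phi^\bullet$ occurs inside the boxed context $\nec\Pi^\bullet$ of a bottom $\nec$ rule of $\tau$: now $\phi^\bullet$ survives into the progress premise, which carries no weakening part to absorb the context of $\pi$. If $\nec\chi^\bullet$ happens to be principal in $\tau$, i.e. $\tau$ ends in $\mathsf{Refl}$ with genuine subproof $\tau'\vdash S,\chi^\bullet,\nec\chi^\bullet$, I would first cut $\weakening_{\chi^\bullet}(\pi)\vdash S,\chi^\bullet,\nec\chi^\circ$ against $\tau'$ on $\nec\chi$ (the local-height sum drops since $\tau'$ is a proper subproof) to get $S,\chi^\bullet$, and then cut $\inv_{\nec\chi^\circ}(\pi)\vdash S,\chi^\circ$ against it on the smaller formula $\chi$. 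Otherwise I branch on the bottom rule of $\pi$: if $\nec\chi^\circ$ is not principal in $\pi$, I permute the cut into $\pi$ instead, where $\nec\chi^\circ$, being a right formula, can only sit in a weakening part and so falls under the easy $\nec$-case; if $\nec\chi^\circ$ is principal in $\pi$, so both proofs end in $\nec$, I would reconstruct $S$ by reapplying $\tau$'s $\nec$ rule, building its left premise by a height-decreasing cut handled by the induction hypothesis, and building its progress premise $\nec\Pi''^\bullet,\rho^\circ$ by cutting $\tau$'s progress premise against a weakening-free proof of $\nec\Pi''^\bullet,\rho^\circ,\nec\chi^\circ$ assembled from \emph{$\pi$'s own progress premise} $\nec\Sigma^\bullet,\chi^\circ$ via one $\nec$ rule and weakenings. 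The point that makes this legitimate is that this last cut lies beyond a progress edge, hence outside the main fragment, so it may remain. I expect the real labour to be the multiset bookkeeping in this final case, namely matching the two boxed contexts $\nec\Sigma$ and $\nec\Pi''$ by re-splitting the left-boxed formulas between the weakening and boxed parts of the reconstructed $\nec$ rules.
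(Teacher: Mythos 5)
Your proposal is correct and takes essentially the same route as the paper's own proof: the same lexicographic induction (which the paper packages as the ordinal $n\omega+m$), the same reliance on the weakening/contraction/inversion maps of Lemma~\ref{lm:auxiliary-functions} to repair contexts while preserving local height and main-fragment cut-freeness, the same easy permutation cases, and, in the crucial case where the cut formula is a boxed formula sitting in the boxed context of a bottom $\nec$ rule, the same reconstruction in which the residual cut is placed above the progress premise and therefore outside the main fragment. Your minor reorganizations (dispatching $\phi=\bot$ directly via $\inv_{\bot^\circ}$, and handling the Refl-principal case without first splitting on the last rule of $\pi$) streamline but do not change the substance of the paper's argument.
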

\begin{proof}
    To any such pair of proofs \((\pi,\tau)\) we can assign an ordinal \(n\omega + m\) where \(n\) is the rank of \(\phi\) (the cut formula) and \(m\) is the sum of the local heights of \(\pi\) and \(\tau\). We proceed by strong induction in this ordinal and by cases in the shape of \(\pi\) and \(\tau\) as follows.

    Case 1: either \(\pi\) or \(\tau\) consists of an initial sequent only. In this case it suffices to consider if \(S\) is still an initial sequent and in case it is not use \(\inv_{\bot^\circ}\), \(\contr_{p^\bullet}\) or \(\contr_{p^\circ}\) from Lemma \ref{lm:auxiliary-functions}.

    Case 2: the main formulas of \(\pi\) and \(\tau\) are the cut-formula. There are two subcases, if both formulas are implication it must be that \(\pi\) ends in an application of \(\to^\circ\) and \(\tau\) in an application of \(\to^\bullet\). Then, it is easy to see that the cut can be replaced using the I.H. in the premises since the rank of the cut-formula is smaller, applying \(\weakening\) from Lemma \ref{lm:auxiliary-functions} when needed. The other possibility is that the main formulas are box-formulas. We have that \(\pi\) and \(\tau\) have shape:
        \begin{align*}
            \prftree[r]{$\nec$}
            {\prftree[noline]{\pi_0}{S_0,\nec\Pi^\bullet,\phi_0^\circ}}
            {\prftree[noline]{\pi_1}{\nec\Pi^\bullet,\phi_0^\circ}}
            {S_0,\nec\Pi^\bullet,\nec\phi_0^\circ}
            &&
            \prftree[r]{Refl}
            {\prftree[noline]{\tau_0}{S_0,\nec\Pi^\bullet,\phi_0^\bullet,\nec\phi_0^\bullet}}
            {S_0,\nec\Pi^\bullet,\nec\phi_0^\bullet}
        \end{align*}
        where \(S = S_0,\nec\Pi^\bullet\) and \(\phi = \nec \phi_0\).
        By induction hypothesis applied to \(\weakening_{\phi^\bullet_0}(\pi),\tau_0\) (the local height of \(\tau_0\) is strictly smaller than the local height of \(\tau\) and the local height of the other proof is smaller or equal that the local height of \(\pi\) by Lemma \ref{lm:auxiliary-functions}) we get a proof \(\iota_0 \vdash S_0,\nec\Pi^\bullet,\phi^\bullet_0\) without cuts in its main fragment. We can apply the induction hypothesis to \(\pi_0,\iota_0\) (the rank of \(\phi_0\) is smaller than the rank of \(\nec\phi_0\)) and obtain \(\iota_1 \vdash S_0,\nec\Pi^\bullet\) with no cuts in its main fragment. This is the desired proof.

    Case 3: the principal formula of either \(\pi\) or \(\tau\) is not the cut-formula. We have five subcases, three of them correspond to the principal formula not being the cut-formula being the result of \(\to^\bullet,\to^\circ\) or \(\text{refl}\). In these cases we just need to use \(\linv\) and \(\rinv\),\(\inv\) or \(\weakening\) (from Lemma \ref{lm:auxiliary-functions}), apply the I.H. and then apply \(\to^\bullet, \to^\circ\) or \(\text{refl}\) respectively. We explicitely prove the other two subcases.

    \emph{The last rule of \(\tau\) (the case with \(\pi\) is analogous) is \(\nec\) and the cut-formula is in the weakening part of the rule instance}. \(\pi\) and \(\tau\) have shape:
        \begin{align*}
            \prftree[noline]{\pi}
            {S_0,\nec\Pi^\bullet,\nec\psi^\circ,\phi^\circ}
            &&
            \prftree[r]{$\nec$}
            {\prftree[noline]{\tau_0}{S_0,\nec\Pi^\bullet,\psi^\circ,\phi^\bullet}}
            {\prftree[noline]{\tau_1}{\nec\Pi^\bullet,\psi^\circ}}
            {S_0,\nec\Pi^\bullet,\nec\psi^\circ,\phi^\bullet}
        \end{align*}
        where \(S = S_0,\nec\Pi^\bullet,\nec\psi^\circ\). Let \(\iota_0 = \inv_{\nec\psi^\circ}(\pi) \vdash S_0,\nec\Pi^\bullet,\psi^\circ,\phi^\bullet\), we can apply the induction hypothesis to \(\iota_0,\tau_0\) (the local height is reduced thanks to Lemma \ref{lm:auxiliary-functions}) to obtain \(\iota_1 \vdash S_0,\nec\Pi^\bullet,\psi^\circ\) with no cuts in its main fragment. The desired proof is:
        \[
        \prftree[r]{$\nec$}
        {\prftree[noline]{\iota_1}{S_0,\nec\Pi^\bullet,\psi^\circ}}
        {\prftree[noline]{\tau_1}{\nec\Pi^\bullet,\psi^\circ}}
        {S_0,\nec\Pi^\bullet,\nec\psi^\circ}
        \]

    \emph{The last rule of \(\tau\) is \(\nec\) and the cut-formula is not the principal formula but it is not in the weakening part of the rule instance} (the case with \(\pi\) is impossible since in \(\pi\) the cut-formula must be on the right of the sequent, so if it is not the principal formula it appears in the weakening part). \(\pi\) and \(\tau\) have the following shape:
        \begin{align*}
            \prftree[r]{$\nec$}
            {\prftree[noline]{\pi_0}{S_0,\nec\Pi^\bullet,\nec\psi^\circ,\phi_0^\circ}}
            {}
            {\prftree[noline]{\pi_1}{\nec\Pi^\bullet,\phi_0^\circ}}
            {S_0,\nec\Pi^\bullet,\nec\psi^\circ,\nec\phi_0^\circ}
            &&
            \prftree[r]{$\nec$}
            {\prftree[noline]{\tau_0}{S_1,\nec\Lambda^\bullet,\psi^\circ,\nec\phi_0^\bullet}}
            {}
            {\prftree[noline]{\tau_1}{\nec\Lambda^\bullet,\psi^\circ,\nec\phi_0^\bullet}}
            {S_1,\nec\Lambda^\bullet,\nec\psi^\circ,\nec\phi_0^\bullet}
        \end{align*}
        where \(S = S_0,\nec\Pi^\bullet,\nec\psi^\circ = S_1,\nec\Lambda^\bullet,\nec \psi^\circ\) (so $S_0,\nec\Pi^\bullet = S_1,\nec\Lambda^\bullet$ (i)) and \(\phi = \nec\phi_0\). Note that \(\Pi \cup (\Lambda \setminus \Pi) = \Lambda \cup (\Pi \setminus \Lambda)\) and define \(S_2 = S_0 \cap S_1\), so \(S_0 = S_2,\nec(\Lambda \setminus \Pi)^\bullet\) and \(S_1 = S_2,\nec(\Pi \setminus\Lambda)^\bullet\). Let \(\pi' = \inv_{\nec\psi^\circ}(\pi) \vdash S_0,\nec\Pi^\bullet,\psi^\circ,\nec\phi^\circ_0\). Due to (i) we can apply the induction hypothesis to \(\pi',\tau_0\) (the local height is reduced and there are no cuts in the main fragments thanks to Lemma \ref{lm:auxiliary-functions} and because we are in the main fragment of \(\tau\) still) and obtain a proof \(\iota_0 \vdash S_0,\nec\Pi^\bullet,\psi^\circ = S_2,\nec\Pi^\bullet,\nec(\Lambda\setminus \Pi)^\bullet,\psi^\circ\) with no cuts in its main fragment. Let \(\rho\) be the proof:
        \[
        \prftree[r]{\cut}
            {
                \prftree[r]{$\nec$}
                {
                    \prftree[noline]{\weakening_{\nec(\Lambda \setminus \Pi)^\bullet,\psi^\circ}(\pi_1)}
                    {\nec\Pi^\bullet,\nec(\Lambda\setminus\Pi)^\bullet,\psi^\circ,\phi_0^\circ}
                }
                {
                    \prftree[noline]{\pi_1}
                    {\nec\Pi^\bullet,\phi_0^\circ}
                }
                {
                    \nec\Pi^\bullet,\nec(\Lambda\setminus\Pi)^\bullet,\psi^\circ,\nec\phi_0^\circ
                }
            }
            {}
            {
                \prftree[noline]{\weakening_{\nec(\Pi\setminus \Lambda)^\bullet}(\tau_1)}{\nec\Pi^\bullet,\nec(\Lambda\setminus\Pi)^\bullet,\psi^\circ,\nec\phi_0^\bullet}
            }
            {\nec\Pi^\bullet,\nec(\Lambda\setminus\Pi)^\bullet,\psi^\circ}
        \]
        Then, the desired proof is   
        \[
        \prftree[r]{$\nec$}
        {\prftree[noline]{\iota_0}{S_2,\nec\Pi^\bullet,\nec(\Lambda\setminus \Pi)^\bullet,\psi^\circ}}
        {}
        {\prftree[noline]{\rho}{\nec \Pi^\bullet, \nec (\Lambda \setminus \Pi)^\bullet, \psi^\circ}}
        {S_2,\nec\Pi^\bullet,\nec(\Lambda \setminus \Pi)^\bullet, \nec\psi^\circ}
        \]
        We notice that the constructed proof does not have any cuts in its main fragment, since the right premise of \(\nec\) is outside the main fragment of the proof.
\end{proof}

\begin{lemma}
    \label{lm:pushing-cuts-upwards}
    There is a function \(\mathsf{cuts}\textsf{-}\mathsf{up} : \mathbb{P}^\infty\Grzc \longrightarrow \mathbb{P}^\infty\Grzc\) such that if \(\pi \vdash S\) then \(\mathsf{cuts}\textsf{-}\mathsf{up}(\pi) \vdash S\) and \(\mathsf{cuts}\textsf{-}\mathsf{up}(\pi)\) has no instances of \(\cut\) in its main fragment.
\end{lemma}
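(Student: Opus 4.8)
The plan is to build \(\mathsf{cuts}\textsf{-}\mathsf{up}\) by well-founded recursion on the number \(N(\pi)\) of instances of \(\cut\) occurring in the main fragment of \(\pi\). This is a genuine natural number because the main fragment \(\gfragment{}{\pi}{\nil}\) is a finite tree. If \(N(\pi) = 0\), then \(\pi\) already has no cut in its main fragment and I set \(\mathsf{cuts}\textsf{-}\mathsf{up}(\pi) = \pi\); this clearly satisfies the conclusion. The content of the lemma is therefore the recursion step, where I iterate the single-cut elimination provided by Lemma \ref{lm:pusing-topmost-cut-upwards}.

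For \(N(\pi) > 0\) I choose a \emph{topmost} cut in the main fragment, namely a node \(w\) that is maximal with respect to the prefix order \(\worder\) among all cut nodes of the main fragment (to keep \(\mathsf{cuts}\textsf{-}\mathsf{up}\) a function I fix a deterministic choice, e.g. the \(\worder\)-least such \(w\)). Such a \(w\) exists since the main fragment is finite and contains at least one cut. Writing \(S'\) and \(\phi\) for the side sequent and cut formula at \(w\), the two premises \(w0\) and \(w1\) of \(w\) satisfy \(\sequent{\pi}{w0} = S',\phi^\circ\) and \(\sequent{\pi}{w1} = S',\phi^\bullet\). Because \(\cut\) makes no progress, the edges into \(w0\) and \(w1\) are non-progressing, so \(w0\) and \(w1\) lie in the same fragment as \(w\); their subproofs \(\pi_0\) and \(\pi_1\) thus have main fragments consisting precisely of those nodes of the main fragment of \(\pi\) lying above \(w0\) and \(w1\). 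Since \(w\) was chosen topmost, none of these nodes carries a \(\cut\), so \(\pi_0 \vdash_{\Grzc} S',\phi^\circ\) and \(\pi_1 \vdash_{\Grzc} S',\phi^\bullet\) have cut-free main fragments and Lemma \ref{lm:pusing-topmost-cut-upwards} applies, yielding a proof \(\rho \vdash_{\Grzc} S'\) with no cut in its main fragment.

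I then form \(\pi'\) by replacing in \(\pi\) the subtree rooted at \(w\) with \(\rho\). As \(\rho\) and the old subtree both conclude \(S'\), the rule instance at the parent of \(w\) is untouched and \(\pi' \vdash_{\Grzc} S\). The key claim is \(N(\pi') < N(\pi)\): the part of the main fragment lying strictly below \(w\) or on branches incomparable with \(w\) is left unchanged, hence contributes the same cuts, while the whole region at and above \(w\) is discarded; since the edge into \(w\) stays non-progressing, the main fragment of \(\rho\) becomes part of the main fragment of \(\pi'\), and by construction this region is cut-free. Thus the cut at \(w\) vanishes and no new cut enters the main fragment, so \(N(\pi') = N(\pi) - 1\). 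I define \(\mathsf{cuts}\textsf{-}\mathsf{up}(\pi) = \mathsf{cuts}\textsf{-}\mathsf{up}(\pi')\), which is legitimate by the recursion hypothesis and inherits a cut-free main fragment and the conclusion \(S\).

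The step that will require the most care is the justification of \(N(\pi') < N(\pi)\), since it depends on a precise analysis of how the fragmentation \(\noprog{\pi}\) behaves under the surgery at \(w\): I must check that replacing the subtree at \(w\) neither shifts any fragment boundary below \(w\) nor converts a previously progressing edge into a non-progressing one, so that the only net effect on the main fragment is to exchange the cut-containing region above \(w\) for the cut-free main fragment of \(\rho\). This is exactly the place where the facts that \(\cut\) is non-progressing and that the replacement is purely local are indispensable.
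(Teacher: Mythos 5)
Your proposal is correct and takes essentially the same route as the paper: the paper's proof is likewise an induction on the number of cuts in the main fragment, selecting a topmost cut and removing it via Lemma \ref{lm:pusing-topmost-cut-upwards}. Your write-up simply makes explicit the surgery and the fragmentation bookkeeping (that the premises of a topmost cut have cut-free main fragments, and that the cut count drops by one after replacement) which the paper leaves implicit.
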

\begin{proof}
    This is a simple induction in the number of cuts in the main fragment of \(\pi\). For the inductive step just go to a top-most cut in the main fragment (i.e. a cut without any cuts that are above it in the main fragment) and apply Lemma \ref{lm:pusing-topmost-cut-upwards} to get rid of that cut.
\end{proof}

Thanks to the tools of Section \ref{sec:proof-trans} the cut-elimination proof from Lemma \ref{lm:pushing-cuts-upwards} is straightforward.

\begin{theorem}
    \label{th:cut-elim-grzc}
    If \(\vdash_{\Grzc} S\), then \(\vdash_{\Grz^\infty} S\).
\end{theorem}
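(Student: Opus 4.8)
The plan is to package the single-step operation of Lemma~\ref{lm:pushing-cuts-upwards} as a proof translation step and then let Theorem~\ref{th:proof-translation-step-to-proof-translation} extend it by corecursion to whole proofs. Concretely, I would define a map \(\alpha : \fftree \longrightarrow \treefun(\fftree)\) as follows. Given \(\pi \in \ffproof{\Grzc}\), which by Lemma~\ref{lm:equivalent-proof-notions} we freely identify with the non-wellfounded proof \(\tau^\pi\), write \(\tau'\) for \(\mathsf{cuts}\textsf{-}\mathsf{up}(\tau^\pi)\) and set
\[
\alpha(\pi) = \destruct(\tau', \noprog{\tau'});
\]
on the remaining elements of \(\fftree\) (where no condition is imposed) let \(\alpha\) act as \(\destruct\). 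Intuitively, \(\alpha\) uses Lemma~\ref{lm:pushing-cuts-upwards} to clear every instance of \(\cut\) out of the main fragment and then exposes this now cut-free main fragment together with the attachment of the remaining subproofs at its non-wellfounded leaves.

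The key step is then to verify that \(\alpha\) is a proof translation step from \(\Grzc\) to \(\Grz^\infty\), i.e.\ that it satisfies the two conditions of that definition. For the first condition, note that \(\gfragment{\alpha}{\pi}{\nil}\) is the main fragment of \(\tau'\). Since \(\tau'\) is a proof of \(\Grzc\), this fragment is a proof fragment with the correct sequent labels at its non-wellfounded leaves; and since by Lemma~\ref{lm:pushing-cuts-upwards} \(\tau'\) contains no instance of \(\cut\) in its main fragment, every rule instance there belongs to \(\Grz^\infty\), so it is in fact a proof fragment of the cut-free calculus \(\Grz^\infty\). For the second condition, the subelements \(\subelement{\alpha}{\pi}{[w]}\) for \(w \in \nwleaf{\gfragment{\alpha}{\pi}{\nil}}\) are exactly the subproofs of \(\tau'\) rooted at its progressing leaves, hence again finite-fragmented proofs of \(\Grzc\); this is precisely what allows the corecursion to continue.

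Once \(\alpha\) is established as a proof translation step, Theorem~\ref{th:proof-translation-step-to-proof-translation}(ii) yields directly a non-wellfounded proof translation \(\tau \mapsto (\mathsf{p}_0 \circ \final{\alpha})(\tau, \noprog{\tau})\) from \(\Grzc\) to \(\Grz^\infty\). It remains only to check that this translation preserves the conclusion. Because \(\final{\alpha}\) is a coalgebra morphism, Lemma~\ref{lm:fundamental-properties} gives \(\gfragment{}{\final{\alpha}(\pi)}{\nil} = \gfragment{\alpha}{\pi}{\nil}\), so the main fragment of the output is the cut-free main fragment of \(\tau'\), whose root sequent is the conclusion of \(\mathsf{cuts}\textsf{-}\mathsf{up}(\tau^\pi)\), which equals the conclusion \(S\) of \(\tau^\pi\) by Lemma~\ref{lm:pushing-cuts-upwards}. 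Hence from \(\vdash_{\Grzc} S\), i.e.\ a proof in \(\illproof{\Grzc}\) with conclusion \(S\), we obtain a proof in \(\illproof{\Grz^\infty}\) with the same conclusion \(S\), giving \(\vdash_{\Grz^\infty} S\).

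I expect the only genuinely delicate point to be the verification of the first condition, namely that erasing the cuts does not disturb the local progress bookkeeping: one must confirm that the non-wellfounded leaves of the main fragment of \(\tau'\) are precisely the right premises of \(\nec\) rules, so that the cut-free fragment still satisfies the progress clause of a proof fragment. This is secured because \(\tau'\) is an honest proof of \(\Grzc\) and the fragmentation \(\noprog{}\) is determined by exactly the progressing nodes; every other part of the argument is routine unfolding of the definitions and appeals to the already established lemmas.
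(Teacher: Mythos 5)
Your proposal is correct and follows essentially the same route as the paper: both define the translation step $\alpha$ by composing $\mathsf{cuts}\textsf{-}\mathsf{up}$ with $\destruct$, verify the two conditions of a proof translation step exactly as you do (cut-freeness of the main fragment giving a $\Grz^\infty$ proof fragment, subtrees remaining $\Grzc$ proofs), and then invoke Theorem~\ref{th:proof-translation-step-to-proof-translation}. Your additional care in extending $\alpha$ to all of $\fftree$ and in spelling out conclusion-preservation via Lemma~\ref{lm:fundamental-properties} only makes explicit what the paper leaves implicit.
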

\begin{proof}
    Define the function \(\alpha : \mathbb{P}^{\text{ff}}\Grzc \longrightarrow \treefun(\mathbb{P}^\text{ff})\) that, given \mbox{\(\pi = (\tau,\sim)\)}, outputs \((\iota,\mu) = \destruct(\tau',\sim^{\tau'})\) where \(\tau' = \mathsf{cuts}\text{-}\mathsf{up}(\tau)\). Since \(\iota\) is the main fragment of \(\mathsf{cuts}\textsf{-}\mathsf{up}(\tau)\), it is a proof-fragment in \(\Grzc\).  Since there are no instances of \(\cut\) in $\iota$ by Lemma \ref{lm:pushing-cuts-upwards}, and \(\Grzc\) only progresses in the same rule instances as in \(\Grz^\infty\), it is clear that $\iota$ is a proof fragment of \(\Grz^\infty\). Since \(\mu(w)\) is by definition a subtree of \(\mathsf{cuts}\textsf{-}\mathsf{up}\) which is a proof in \(\Grzc\) we have that \(\mu(w)\) is also a proof in \(\Grzc\). We can conclude that the conditions to apply Theorem \ref{th:proof-translation-step-to-proof-translation} are fulffiled. Hence we obtain a proof translation from \(\Grzc\) to \(\Grz^\infty\) that does not change the conclusion sequence (by definition of the \(\alpha\)) so we have the desired result.
\end{proof}

We conclude that in any local progress sequent calculi where cuts can be pushed outside the main fragment, we can show cut-elimination. In particular, we remark that cut-elimination can be proven for weak Grzegorczyk modal logic $\mathsf{wGrz}$ (see \cite{ShamkanovwGrz} to see a non-wellfounded proof system for this logic and a proof of its cut-elimination using ultrametric spaces) by adapting the function $\mathsf{cuts}\textsf{-}\mathsf{up}$ accordingly. This implies that cut-elimination for local progress sequent calculi is quite close to cut-elimination of finitary systems since the main fragment is always finite. 

\section{Final remarks and future work}

We have introduced a uniform method to define proof translations for non-wellfounded proofs with a local progress condition. Our method is based on the observation that it suffices to translate a finite fragment of a proof and then define from such a translation step a map by corecursion. The translation method is uniform in the sense that the specific proof systems, between which a translation shall be defined, as well as the specific type of translation, does not matter. We have illustrated the method by proving cut-elimination for a non-wellfounded calculus for $\mathsf{Grz}$. 

A natural direction for further research is to study the relationship between our corecursive method for cut-elimination and the ultrametric approach by Shamkanov and Savateev. At the time of writing this paper it is unknown to us how exactly these two method relate and whether using ultrametric spaces is strictly more powerful than our approach. Another direction that we are currently pursuing is to explore which logics can be given a complete non-wellfounded local progress calculus. The standard examples are $\mathsf{GL}$ and $\mathsf{Grz}$, however we suspect that many modal fixed point logics belonging to the alternation free fragment of the modal mu-calculus may also be given a local progress calculus. For example modal logic with the master modality admits such a calculus\footnote{Adding ordinal annotations, which may complicate the use of this method.}, leading to the question whether we can establish cut-elimination for this calculus using our corecursive method. Finally, another pressing open question is whether a similar method for proof translations can be defined for non-wellfounded calculi based on a trace condition. We expect this to be a difficult problem, since it is generally challenging to ensure that traces are preserved when translating proofs.

\bibliographystyle{aiml}
\bibliography{aiml}

\end{document}